\newtheorem{proposition}{Proposition}[section]
\newtheorem{theorem}[proposition]{Theorem}
\newtheorem{lemma}[proposition]{Lemma}
\newtheorem{corollary}[proposition]{Corollary}
\theoremstyle{definition}
\newtheorem{definition}[proposition]{Definition}
\theoremstyle{remark}
\newtheorem{remark}[proposition]{Remark}
\numberwithin{equation}{section}
\newcommand{\N}{\mathbb{N}}
\newcommand{\R}{\mathbb{R}}
\renewcommand{\div}{\mathrm{div}}
\title[On the creation of conjugate points for thermostats]{On the creation of conjugate points for thermostats}
\author[J.~Echevarría Cuesta]{Javier Echevarría Cuesta}
\address{Department of Pure Mathematics and Mathematical Statistics, University of Cambridge, Cambridge CB3 0WB, UK}
\email{je396@cam.ac.uk}
\author[J.~Marshall Reber]{James Marshall Reber\textsuperscript{*}}
\thanks{\textsuperscript{*}Corresponding author}
\address{Department of Mathematics,  University of Chicago, 5734 S. University Avenue, Chicago, IL 60637, USA }
\email{jmarshallreber@uchicago.edu}
\begin{document}

\begin{abstract}
%For a fixed closed oriented Riemannian surface $(M, g)$, we show that if the set of smooth functions $\lambda$ on its unit tangent bundle such that the thermostat $(M, g, \lambda)$ has no conjugate points has an interior in the $\mathcal{C}^2$ topology, then this interior consists of thermostats admitting a dominated splitting. Furthermore, we prove that if a thermostat admits a dominated splitting, then its non-wandering set contains no conjugate points.

Let $(M, g)$ be a closed oriented Riemannian surface and let $SM$ be its unit tangent bundle. We show that the interior in the $\mathcal{C}^2$-topology of the set of smooth functions $\lambda:  SM\to \R$ for which the thermostat $(M, g, \lambda)$ has no conjugate points is a subset of those functions for which the thermostat is projectively Anosov. Moreover, we prove that if a reversible thermostat is projectively Anosov, then its non-wandering set contains no conjugate points. %Moreover, we prove that if a reversible thermostat admits a continuous invariant subbundle of its characteristic set, then its non-wandering set has no conjugate points. %Finally, we provide a counterexample showing that this fails for non-reversible thermostats.
\end{abstract}

\maketitle

\section{Introduction}

%Thermostats generalize geodesic and magnetic flows by introducing a velocity-dependent force that preserves kinetic energy. These systems offer a model for non-conservative dynamics, where the classical Hamiltonian structure is absent.

%Thermostats generalize geodesic and magnetic flows by introducing a velocity-dependent force that preserves kinetic energy. These systems arise naturally in mechanics and dynamics, offering a model for non-conservative dynamics where the classical Hamiltonian structure is absent.

Let $(M, g)$ be a closed oriented Riemannian surface and let $\lambda\in \mathcal{C}^\infty(SM,\R)$ be a smooth function on the unit tangent bundle $\pi : SM\to M$. A smooth curve $\gamma : \R\to M$ is a \textit{thermostat geodesic} if it satisfies the second-order differential equation
$$\nabla_{\dot{\gamma}}\dot{\gamma}= \lambda(\gamma, \dot{\gamma})J\dot{\gamma},$$
where $J: TM\to TM$ is the complex structure on $M$ induced by the orientation, that is, rotation by $\pi/2$ according to the orientation of the surface. This equation describes the motion of a particle confined to $M$ and subjected to a force of magnitude $\lambda$ that is always orthogonal to its velocity. Since $\gamma$ has constant speed, this defines a flow on $SM$ given by $\varphi_t(\gamma(0), \dot{\gamma}(0))\coloneq (\gamma(t),\dot{\gamma}(t))$. The infinitesimal generator is $F \coloneq X + \lambda V$, where $X$ is the geodesic vector field and $V$ is the vertical vector field (see \cite[Lemma 7.4]{merry11}). The triple $(M,g, \lambda)$ is called a \textit{thermostat}, reflecting one of its physical interpretations: the surface is subjected to an external force field, which contributes to the kinetic energy, but it is simultaneously kept in contact with a heat reservoir, ensuring that particles on $M$ maintain a constant kinetic energy. 

Thermostats encompass a variety of dynamical systems. For example, we recover geodesic flows when $\lambda=0$, and if $\lambda$ depends only on the position of the particle (that is, $V\lambda=0$), we obtain magnetic flows. When $\lambda$ depends linearly on the velocity, we instead obtain \textit{Gaussian thermostats}. These correspond to the geodesic flows of metric connections on $M$, including those with non-zero torsion (see \cite{przytycki08}). 

A fundamental property of thermostats is that they can be \textit{dissipative}, meaning that they do not preserve any smooth volume form on $SM$. Thus, they provide a unified point of view to study many non-conservative dynamical systems, where the classical Hamiltonian structure is absent.

%Specifically, the divergence of $F$ with respect to the Liouville form on $SM$ is given by $V\lambda$, allowing for dissipative dynamics and distinguishing thermostats from Hamiltonian systems.

%This additional degree of freedom sets thermostats apart from Hamiltonian systems, allowing for dissipative behavior and making them particularly interesting in the study of non-conservative mechanics.

As in the geodesic case, the exponential map $\exp^\lambda:TM\to M$ is defined by
\begin{equation}\label{eq:exponential-map}
    \exp_x^\lambda(t v)\coloneqq\pi(\varphi_t(x,v)), \qquad x\in M, \, t\geq 0,\, v\in S_xM.
\end{equation}
For every $x\in M$, the map $\exp_x^\lambda$ is $\mathcal{C}^\infty$ on $T_xM\setminus \{0\}$ but, in general, only $\mathcal{C}^1$ at $0$; see, for instance, the proof of \cite[Lemma A.7]{dairbekov07b}. The lack of smoothness at the origin reflects the potential non-reversibility of thermostat flows. We say that the thermostat has \textit{no conjugate points} if $\exp_x^\lambda$ is a local diffeomorphism for all $x\in M$. Given a thermostat geodesic segment $\gamma: [0,T]\to M$ with distinct endpoints $x_0 \coloneq \gamma(0)$ and $x_1\coloneq\gamma(T)$, we say that $x_0$ and $x_1$ are \textit{conjugate along} $\gamma$ if the map $\exp_{x_0}^\lambda$ is singular at $T\dot{\gamma}(0)$, that is, if the differential $d_{T\dot{\gamma}(0)}\exp_{x_0}^\lambda$ has non-trivial kernel.  The no-conjugate-points property means that, on each connected component of $M$, any two points in its universal cover are joined by the lift of a unique thermostat geodesic. We are particularly interested in this dynamical feature because, as shown by Klingenberg \cite{klingenberg74}, having no conjugate points is a necessary condition for a geodesic flow to be Anosov. This remains true for thermostats thanks to \cite[Lemma 4.1]{dairbekov07}.

%A point $\gamma(T)$ is conjugate to $\gamma(0)$ along $\gamma$ if $\exp_{\gamma(0)}^\lambda$ is singular at $T\dot{\gamma}(0)$.

Recall that the flow $\{\varphi_t\}_{t \in \R}$ is said to be \textit{Anosov} if there exist a flow-invariant splitting of the tangent bundle $T(SM) = \R F \oplus E_s \oplus E_u$ and constants $C \geq 1$ and $0 < \rho < 1$ such that, for all $v\in SM$ and $t \geq 0$, we have 
%\[ \|d_v\varphi_t|_{E_s(v)}\| \leq C \rho^t \text{ and } \|d_v \varphi_{-t}|_{E_u(v)}\| \leq C \rho^t.\] 
\begin{equation*}%\label{eq:hyperbolic-estimates}
\Vert d\varphi_{t}|_{E_s(v)}\Vert \leq C\rho^t, \qquad \Vert d\varphi_{-t}|_{E_u(v)}\Vert \leq C\rho^t,
\end{equation*}
where $\Vert \cdot\Vert$ denotes the operator norm induced by $g$. On the other hand, the flow is said to be \textit{projectively Anosov} (or that it admits a \textit{dominated splitting}) if there exist a flow-invariant splitting of the quotient tangent bundle $T(SM)/\mathbb{R} F = \mathcal{E}_s \oplus \mathcal{E}_u$ and constants $C \geq 1$ and $0 < \rho < 1$ such that, for all $v \in SM$ and $t \geq 0$, we have
\[ \Vert d\varphi_t|_{\mathcal{E}_s(v)}\Vert  \Vert d\varphi_{-t}|_{\mathcal{E}_u(\varphi_t(v))}\Vert  \leq C \rho^t.\]
While every Anosov flow is projectively Anosov, the converse is not true. The distinction does not appear in the geodesic or magnetic settings because volume-preserving projectively Anosov flows are, in fact, Anosov \cite[Proposition 2.34]{araujo10}. Due to the dissipative nature of thermostats, the projectively Anosov property is often the more natural condition to study (see \cite{mettler19, echevarria-cuesta25b}). 
%This distinction does not appear in the geodesic and magnetic settings because volume-preserving projectively Anosov flows are in fact Anosov \cite[Proposition 2.34]{araujo10}. 

%Historically, the projectively Anosov property was studied by Mitsumatsu \cite{mitsumatsu} and Eliashberg and Thurston \cite{eliashberg} for its connection to bi-contact structures on $3$-manifolds. We direct the reader to \cite{hozoori} and the references within for more details.
%Additionally, the notion of dominated splittings for diffeomorphisms was introduced independently by Pliss \cite{}, Liao \cite{}, and Ma\~{n}\'{e} \cite{} for its connection to the structural stability conjecture. 
%Due to the dissipative nature of thermostats, the projectively Anosov property is often the more natural condition to study in this context (see \cite{mettler19, echevarria-cuesta25b}). 

%The flow is said to admit a \textit{dominated splitting}, which can be thought of as a projective form of the Anosov property, when these estimates are relaxed to
%\begin{equation*}%\label{eq:dominated-estimates}
%\Vert \left.d\varphi_t\right|_{E^s(v)}\Vert \cdot  \Vert \left.d\varphi_{-t}\right|_{E^u(\varphi_t(v))}\Vert\leq C \rho^t
%\end{equation*}
%for all $t \geq 0$. %For geodesic and magnetic flows, due to volume preservation, 

Let $B(M, g)$ be the set of $\lambda\in \mathcal{C}^\infty(SM, \R)$ such that the thermostat $(M, g, \lambda)$ has no conjugate points, $A(M, g)$ the set of $\lambda$ for which it is Anosov, and $D(M, g)$ the set of $\lambda$ for which it is projectively Anosov. It is an interesting question to see how these sets relate to one another. By definition, we have $A(M,g)\subseteq D(M,g)$. 
Moreover, as previously mentioned, it is known that $A(M, g)\subseteq B(M,g)$. However, whether the inclusion $D(M,g)\subseteq B(M,g)$ also holds is still an open question \cite[Section 1.3]{echevarria-cuesta25b}. As a particular case, it is still unknown whether $D(\mathbb{S}^2,g) \neq \emptyset$; since $B(\mathbb{S}^2,g) = \emptyset$, this would give an example where the inclusion does not hold. On the other hand, it is shown in \cite[Lemma 4.2]{echevarria-cuesta25b} that the inclusion $A(M,g) \subseteq D(M,g)$ can be proper by showing that $D(\mathbb{T}^2,g) \neq \emptyset$ for any Riemannian metric $g$ on the 2-torus. 

Expanding on this analysis, we will show in Proposition \ref{proposition:always-anosov}  that $A(M,g) \neq \emptyset$ when $M$ has genus $\geq 2$, even if the underlying metric $g$ admits a pair of conjugate points. This should be contrasted with the examples in \cite[Section 7]{burns02}, where much more effort is needed in order to construct an example of an Anosov magnetic system whose underlying Riemannian metric admits a pair of conjugate points. This highlights the flexibility of thermostats beyond the volume-preserving setting.
%Proposition \ref{proposition:always-anosov} will show an even stronger result: there exist Anosov thermostats for all $g$.

%If $M$ has genus $0$, then observe that $B(M, g)=\emptyset$ since the exponential map cannot cover the $2$-sphere $\mathbb{S}^2$. By \cite{plante72}, we also have $A(\mathbb{S}^2,g)=\emptyset$, but whether $D(\mathbb{S}^2, g)$ is empty remains unknown. For genus $1$, we again have $A(M, g)=\emptyset$ by \cite{plante72}, but \cite[Lemma 4.2]{echevarria-cuesta25b} shows that $B(M, g)\cap D(M, g)\neq \emptyset$. When $M$ has genus $\geq 2$, Proposition \ref{proposition:always-anosov} will show an even stronger result: there exist Anosov thermostats for all $g$.

%The set $A(M,g)$ is always open in the $\mathcal{C}^k$ topology for every $k\in \N$, as shown in \cite{anosov67}. A similar argument (see \cite[Proposition 2.3]{sambarino16} for diffeomorphisms), shows the same for $D(M,g)$. On the other hand, as we will show in Proposition \ref{proposition:b-is-closed}, $B(M,g)$ is closed in each $\mathcal{C}^k$ topology. The first main result of this paper is the following:

Building on this theme, our first main result shows how the interior of $B(M,g)$ compares to $D(M,g)$. Using the fact that Anosov systems are $\mathcal{C}^1$ open (thanks to the Alekseev cone field criterion \cite[Proposition 5.1.7]{fisher19}), it follows that the set $A(M,g)$ is always open in the $\mathcal{C}^k$-topology for every $k\geq 1$. A similar argument (see \cite[Proposition 2.3]{sambarino16} for diffeomorphisms) shows that the same holds for $D(M,g)$. On the other hand, we will show in Proposition \ref{proposition:b-is-closed} that $B(M,g)$ is closed in each $\mathcal{C}^k$-topology for $k \geq 2$. With this in mind, we prove the following.

\begin{theorem}\label{theorem:c2-interior}
    The interior of $B(M, g)$ in the $\mathcal{C}^2$-topology is contained in $D(M,g)$. 
\end{theorem}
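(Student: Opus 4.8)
The strategy is a proof by contrapositive combined with the closedness of $B(M,g)$. Suppose $\lambda$ lies in the $\mathcal{C}^2$-interior of $B(M,g)$ but $\lambda \notin D(M,g)$, i.e. the thermostat $(M,g,\lambda)$ has no conjugate points but is not projectively Anosov. The key analytic object is the scalar Riccati-type equation governing the evolution of Jacobi fields along a thermostat geodesic: writing a perpendicular Jacobi field as $y F^\perp$ (where $F^\perp$ is the rotation of $F$ by $J$), the quantity $u = \dot y / y$ satisfies a Riccati equation $\dot u + u^2 + \mathbb{K}_\lambda = 0$ along orbits, where $\mathbb{K}_\lambda$ is the thermostat analogue of Gaussian curvature (involving $K$, $\lambda$, and derivatives $X\lambda$, $V\lambda$, $V^2\lambda$). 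The no-conjugate-points hypothesis is equivalent to the existence, along every orbit, of a globally defined (bounded) solution $u_\lambda$ of this Riccati equation — the analogue of the stable Riccati solution in the geodesic case. This is the thermostat version of the Hopf/Green bundle construction, and I would invoke the references already cited ([dairbekov07], [dairbekov07b]) for the existence of such a continuous invariant solution $u_\lambda: SM \to \R$ when there are no conjugate points.

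The heart of the argument is to perturb. Since $\lambda$ is in the $\mathcal{C}^2$-interior of $B$, every $\mathcal{C}^2$-small perturbation $\lambda + \varepsilon \mu$ still has no conjugate points, hence still has a globally defined Riccati solution $u_{\lambda+\varepsilon\mu}$. The trick is to choose the perturbation $\mu$ so that it only decreases the curvature term $\mathbb{K}_\lambda$ (or, more precisely, so that the perturbed system has a strictly more negative effective curvature somewhere along a chosen orbit segment while not being worse elsewhere). A Sturm comparison argument then forces the perturbed Riccati solution to blow up — producing conjugate points — unless the original system already had a definite "gap" in its Jacobi field behaviour. Concretely: one shows that if the unperturbed system is not projectively Anosov, then along some orbit the stable and unstable Riccati solutions $u_\lambda^s$ and $u_\lambda^u$ coincide (or the associated Green subbundles are not uniformly transverse), and at such a point an arbitrarily $\mathcal{C}^2$-small decrease of $\mathbb{K}_\lambda$ destroys the bounded solution, contradicting $\lambda + \varepsilon\mu \in B$. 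The cleanest packaging is: no conjugate points $+$ $\mathcal{C}^2$-stably no conjugate points $\Rightarrow$ the two Green bundles $E^s_\lambda, E^u_\lambda$ are continuous, flow-invariant, and everywhere transverse; transversality of continuous invariant subbundles on the compact manifold $SM$, together with the quasi-Anosov / cone-field criterion, upgrades to a dominated splitting, i.e. $\lambda \in D(M,g)$.

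The main obstacle — and the step requiring the most care — is establishing that $\mathcal{C}^2$-stable absence of conjugate points forces the Green subbundles to be \emph{uniformly} transverse (equivalently, that $u^s_\lambda - u^u_\lambda$ is bounded away from zero). Pointwise transversality of $E^s$ and $E^u$ is essentially automatic from perturbative blow-up of Jacobi fields, but uniform transversality is what distinguishes "no conjugate points" from "projectively Anosov," and getting it is exactly where the $\mathcal{C}^2$ (rather than $\mathcal{C}^1$) hypothesis and the closedness of $B(M,g)$ (Proposition \ref{proposition:b-is-closed}) must be used: if transversality degenerated along a sequence of points, one extracts a limiting orbit along which $u^s = u^u$, and then a single localized $\mathcal{C}^2$-small bump decreasing $\mathbb{K}_\lambda$ near that orbit creates a conjugate point in the perturbed system, contradicting that $\lambda$ is interior to $B$. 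One must check that such a bump can be made genuinely $\mathcal{C}^2$-small while having a non-negligible effect on the Riccati equation — this is a quantitative Sturm comparison estimate, and it is the technical crux. Once uniform transversality of the (bounded, invariant, continuous) Green bundles is in hand, the passage to a dominated splitting is standard.
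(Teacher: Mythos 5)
Your overall strategy is the right one and matches the paper's: argue by contrapositive, use the coincidence of the stable and unstable Green bundles at some $v\in SM$ (which, by \cite[Theorem 1.6]{echevarria-cuesta25b}, is exactly what failure of the projectively Anosov property means for a thermostat without conjugate points), and then create conjugate points with a localized, $\mathcal{C}^2$-small, curvature-decreasing perturbation. However, the proposal stops precisely at the step you yourself call ``the technical crux,'' and that step is the actual content of the theorem, so as written there is a genuine gap. Two quantitative inputs are missing. First, you never explain why a perturbation of $\lambda$ can be $\mathcal{C}^2$-small yet have a non-negligible effect on the curvature: the damped thermostat curvature $\tilde{\kappa}$ involves second derivatives of $\lambda$ (through $H\lambda$ and $FV\lambda$), so a $\mathcal{C}^2$-small $\phi$ changes those contributions only at order $\epsilon$; the usable effect comes from the zeroth-order terms such as $2\lambda\phi+\phi^2$, which are of size $\sqrt{\epsilon}$ provided one takes $\phi_\epsilon$ of amplitude $\sqrt{\epsilon}$ and constant in the transverse directions near the orbit segment, so that $H\phi_\epsilon=V\phi_\epsilon=0$ along it. Second, you need a quantitative reason why a curvature decrease of this size over a finite time window actually produces conjugate points. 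The paper achieves this with the index form: the test function $f_T$ glued from $z_{-T}$ and $z_T$ satisfies $I_v(f_T)=\dot z_{-T}(0)-\dot z_T(0)$, which tends to $0$ as $T\to\infty$ precisely because the Green bundles coincide along the orbit, so the order-$\epsilon$ gain from the perturbation drives the index negative, and Lemma \ref{lem:index} converts a negative index into conjugate points. Without some version of this two-scale bookkeeping (amplitude $\sqrt{\epsilon}$ versus an $\epsilon$-small index deficit), a Sturm or Riccati comparison alone does not close the argument.

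Two smaller corrections. The Riccati equation you write, $\dot u+u^2+\mathbb{K}=0$, is not the right one for thermostats: the Jacobi equation \eqref{eqn:jacobi} carries the damping term $V(\lambda)\dot y$, and the correct rescaled equation is \eqref{eqn:jacobi-damped} with the damped curvature $\tilde{\kappa}$ of \eqref{eq:definition-damped-curvature}; this is the quantity one must perturb, and it is also why the Green bundles must be built in $T^*(SM)$ rather than $T(SM)$. Moreover, your framing of the difficulty as ``uniform versus pointwise transversality'' is off: by \cite[Theorem 1.6]{echevarria-cuesta25b}, pointwise transversality of $G_s^*$ and $G_u^*$ at every point is already equivalent to the dominated splitting, and pointwise transversality is emphatically not automatic for thermostats without conjugate points --- its failure at a single $v$ is exactly the hypothesis to be exploited, and no compactness or limiting-orbit extraction is needed. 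Finally, since the perturbation is supported in a flow box around a finite orbit segment, a periodic orbit re-enters the support, and one must check (as the paper does by matching $T$ to a multiple of the period and choosing the sign of $\phi_\epsilon$) that the returns do not spoil the sign of the estimate; your sketch does not address this.
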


This theorem is an analogue of a result previously established for geodesic flows in \cite[Theorem A]{ruggiero91} and for Hamiltonian flows in \cite[Theorem 1]{contreras98}, although the lack of volume preservation means that we have to replace the Anosov property with the projectively Anosov property.
%The lack of volume preservation forces us to replace the Anosov property with the projectively Anosov property.
%Note that 
%Due to the lack of volume preservation of thermostats, we 
%have to 
%replace the Anosov property with the projectively Anosov property.
%While those results involve the Anosov property, our setting replaces it with dominated splittings due to the lack of volume preservation. 
The main tools in the proof are the Green bundles in the cotangent bundle $T^*(SM)$, \cite[Theorem 1.6]{echevarria-cuesta25b} describing how $B(M,g)\cap D(M,g)$ sits inside $B(M,g)$, and a generalization of the index form to the thermostat setting.

%The Anosov property, present in the previous results, is replaced here by dominated splittings due to the lack of volume preservation. 

Our next main result relates to the study of a potential inclusion $D(M,g)\subseteq B(M,g)$. In particular, we make some progress in the case where the thermostat is \textit{reversible}, in the sense that the flip $(x,v) \mapsto (x, -v)$ on $SM$ conjugates $\varphi_t$ with $\varphi_{-t}$. As we will show in Lemma \ref{lem:reversibility-odd-fourier}, these are the thermostats with odd $\lambda$, so they include Gaussian thermostats as well as projective flows (that is, geodesic flows of torsion-free affine connections), amongst other dynamical systems. %but this collection goes well beyond these examples.

Recurrence of the underlying system is key for our argument. Recall that a point $v\in SM$ is \textit{non-wandering} for the flow $\{\varphi_t\}_{t\in \R}$ if, for every neighbourhood $U$ of $v$ and $T >0$, there exists $t\geq T$ such that $\varphi_t(U)\cap U\neq \emptyset$. We denote by $\Omega\subseteq SM$ the set of all non-wandering points. The set $\Omega$ is non-empty, closed and flow-invariant. For Anosov thermostats, we have $\Omega=SM$ because the flow is topologically transitive by \cite[Theorem A]{ghys84}. However, the explicit examples in \cite[Section 4]{echevarria-cuesta25b} show that this property does not extend to the setting of projectively Anosov thermostats.

%In an effort to better understand whether $D(M,g)\subseteq B(M,g)$, we make some progress in the case where the thermostat is reversible, in the sense that the flip $(x,v) \mapsto (x, -v)$ on $SM$ conjugates $\varphi_t$ with $\varphi_{-t}$ (just as in the case of geodesic flows).  As we will show in Lemma \ref{lem:reversibility-odd-fourier}, the set of such thermostats includes geodesic flows as well as projective flows (that is, geodesic flows of torsion free affine connections), but this collection goes well beyond these examples.

 %A key ingredient in showing that $A(M,g)\subseteq B(M,g)$ is recurrence, meaning that the non-wandering set of the flow is all of $SM$. 
% Recall that a point $v\in SM$ is \textit{non-wandering} for $\varphi_t$ if for every neighborhood $U$ of $v$ there exists $T>0$ such that $\varphi_T(U)\cap U\neq \emptyset$. We shall denote by $\Omega\subseteq SM$ the set of all non-wandering points of $\varphi_t$. The set $\Omega$ is non-empty, closed, and invariant under $\varphi_t$. For Anosov thermostats, we have $\Omega=SM$ because the flow is transitive by \cite[Theorem A]{ghys84}. 

%We say that $U \subseteq SM$ is \emph{without conjugate points} with respect to a thermostat if for any $v \in U$ there are no pair of conjugate points along the image of $t \mapsto \gamma_v(t)$, where $\gamma_v$ is the thermostat geodesic with initial conditions given by $v$.  

Building on the work in \cite{paternain94}, which takes advantage of a technique by Mañé \cite{mane87}, we generalize Klingenberg's result \cite{klingenberg74} from the setting of Anosov geodesic flows to reversible projectively Anosov thermostats. 

\begin{theorem}\label{theorem:dominated-splitting-theorem}
    %If a reversible thermostat is projectively Anosov, then its non-wanderng set contains no conjugate points.
        The non-wandering set of a reversible projectively Anosov thermostat contains no conjugate points.
\end{theorem}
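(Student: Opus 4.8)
The plan is to argue by contradiction, working with the flow $(\Phi_t)$ induced by $(\varphi_t)$ on the circle bundle $\mathbb{P}(T(SM)/\mathbb{R}F)$. On a surface the dominated splitting $\mathcal{E}_s\oplus\mathcal{E}_u$ consists of two continuous, $\Phi$-invariant line fields, and the vertical subbundle $\mathbb{R}V$ determines a third continuous section $\mathcal{V}$; a point $v$ projects to a conjugate pair realised in time $\tau>0$ exactly when $\Phi_\tau(\mathcal{V}(v))=\mathcal{V}(\varphi_\tau v)$. The scalar thermostat Jacobi (Riccati) equation on a surface shows that the orbit of $\mathcal{V}(v)$ always meets the section $\mathcal{V}$ transversally and in a fixed rotational sense; consequently, by Sturm comparison (and boundedness of the thermostat curvature) the number of conjugate points of $\gamma_w$ in a time window of fixed length $L$ is bounded uniformly for $w$ in a compact set, and ``$\gamma_w|_{[0,L]}$ has a conjugate point'' is essentially an open condition on $w$ once $L$ exceeds the first conjugate time, since once the Riccati solution issued from the vertical drops below a level fixed by the supremum of the curvature it must blow up shortly afterwards. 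Reversibility enters through Lemma~\ref{lem:reversibility-odd-fourier}: the flip $r(x,v)=(x,-v)$ induces a bundle automorphism $\bar r$ which conjugates $\Phi_t$ with $\Phi_{-t}$, fixes $\mathcal{V}$, and interchanges $\mathcal{E}_s$ with $\mathcal{E}_u$, hence also swaps the forward and backward Green bundles wherever these are defined.

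Suppose, for contradiction, that some $v_0\in\Omega$ projects to a conjugate pair, realised by a first conjugate time $T_0>0$. A local observation is that domination and reversibility force $\mathcal{V}(v_0)$ to be transverse to both $\mathcal{E}_s(v_0)$ and $\mathcal{E}_u(v_0)$: if $\mathcal{V}(v_0)=\mathcal{E}_s(v_0)$ the orbit of $\mathcal{V}(v_0)$ would stay on the invariant section $\mathcal{E}_s$ and never return to the vertical; and if $\mathcal{V}(v_0)=\mathcal{E}_u(v_0)$, then the conjugate relation gives $\mathcal{V}(\varphi_{T_0}v_0)=\mathcal{E}_u(\varphi_{T_0}v_0)$, so applying $\bar r$ produces a point of $\Omega$ at which $\mathcal{V}=\mathcal{E}_s$ while a forward conjugate point persists, which is impossible. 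It then follows from domination that $\Phi_t(\mathcal{V}(v_0))\to\mathcal{E}_u$ as $t\to+\infty$ and $\Phi_{-t}(\mathcal{V}(v_0))\to\mathcal{E}_s$ as $t\to+\infty$, so $\gamma_{v_0}$ has only finitely many conjugate points; moreover the Alekseev cone-field criterion attached to the dominated splitting yields a strictly invariant cone field, so after a uniformly bounded time every forward orbit of $\Phi_t$ is trapped in a cone that is a proper sub-arc of its fibre, and hence the orbit of $\mathcal{V}$ along any orbit segment of $(\varphi_t)$ crosses the vertical section at most $N_0$ times, for a universal constant $N_0$.

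The contradiction is then obtained, in the spirit of Mañé's proof of Klingenberg's theorem and its extension in \cite{paternain94}, by exhibiting a single orbit segment of $(\varphi_t)$ on which $\mathcal{V}$ crosses the vertical more than $N_0$ times. Setting $\mathcal{O}:=\{w\in SM:\gamma_w|_{[0,L]}\text{ has a conjugate point}\}$ with $L$ just above the first conjugate time of $\gamma_{v_0}$, one gets an open set $\mathcal{O}$ meeting $\Omega$; using the non-wandering hypothesis together with reversibility (and a Krylov--Bogolyubov/Poincaré-recurrence argument applied to the near-closed orbit segments supplied by $v_0\in\Omega$), one produces a recurrent point whose orbit visits $\mathcal{O}$ and returns infinitely often to a prescribed neighbourhood of itself. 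Selecting $N_0+1$ such returns with gaps larger than $L$, the corresponding subintervals are disjoint and each carries a conjugate point; by superadditivity of the thermostat index form over disjoint subintervals, together with the Morse index theorem on surfaces, this orbit segment carries at least $N_0+1$ conjugate points, so the orbit of $\mathcal{V}$ crosses the vertical section at least $N_0+1$ times, contradicting the cone-field bound. Hence $\Omega$ contains no conjugate points.

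The step I expect to be the main obstacle is precisely the production of the recurrent point inside $\mathcal{O}$: a projectively Anosov thermostat need not be expansive and need not satisfy a shadowing lemma, so the non-wandering hypothesis alone only yields orbit segments that are near-closed and meet $\mathcal{O}$ near their two ends, not a genuinely recurrent orbit along which the conjugate-point phenomenon recurs. This is the point at which reversibility appears to be indispensable — it supplies the symmetry between the forward and backward directions that dissipativity would otherwise destroy, both in the local transversality argument above and in controlling the two Green bundles simultaneously — and it is where the argument diverges most from the volume-preserving reversible situation treated by Klingenberg and Mañé.
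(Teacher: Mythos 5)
Your overall strategy (count conjugate points along orbits, then use recurrence to produce too many) is Mañé's original counting scheme, but as written it has two genuine gaps, and the second is not merely technical.

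First, the central quantitative claim --- that the dominated splitting yields a \emph{universal} bound $N_0$ on the number of times the orbit $\Phi_t(\mathcal{V}(v))$ crosses the vertical section --- is unjustified and, in effect, circular. The strictly invariant cone field traps $\Phi_t(\mathcal{V}(v))$ in a small neighbourhood of $\mathcal{E}_u$ after a bounded time, but nothing prevents $\mathcal{E}_u$ itself from meeting $\mathcal{V}$ repeatedly along the orbit; being confined to a ``proper sub-arc of the fibre'' does not keep the orbit away from $\mathcal{V}$ when that sub-arc tracks $\mathcal{E}_u$ through the vertical. By the twist property all crossings occur with the same rotational sense, so the total count is a relative winding number that can a priori grow linearly in time. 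Ruling out $\mathcal{E}_u(w)=\mathcal{V}(w)$ on $\Omega$ is precisely the transversality statement that the theorem reduces to (this is how Proposition \ref{proposition:invariant-subbundle} is organized, via the characterization of conjugate points in terms of $\mathbb{H}^*$), so it cannot be assumed in order to obtain $N_0$. The same circularity affects your ``local observation'': the assertions that the orbit of $\mathcal{E}_s(v_0)$ ``never returns to the vertical'' and that a point of $\Omega$ with $\mathcal{V}=\mathcal{E}_s$ and a forward conjugate point ``is impossible'' are exactly instances of the conclusion.

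Second, the recurrence step you flag as the main obstacle is indeed a gap that the non-wandering hypothesis alone cannot fill: without expansivity or shadowing, near-returns do not produce a genuinely recurrent orbit with $N_0+1$ separated visits to $\mathcal{O}$. The paper's proof sidesteps both problems by replacing the count of conjugate points along long orbit segments with a homotopy invariant, the Maslov index on $\Lambda(SM)$. From a tangency $E^*(v)=\mathbb{H}^*(v)$ at a non-wandering point one builds a \emph{closed curve} $c$ (a closed orbit near $\mathbb{H}^*$, Proposition \ref{proposition:existence-of-pseudo-geodesics}) that coincides with true orbit segments only near the tangencies and is closed up arbitrarily inside regions where $E^*\neq\mathbb{H}^*$; the twist property makes $\nu_{E^*}([c])$ equal to the (nonnegative, here strictly positive) number of tangencies (Lemma \ref{lemma:maslov-index-counting}). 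Reversibility is then used exactly once and sharply: the mirrored curve $\overline{c}$ is again a closed orbit near $\mathbb{H}^*$ for the \emph{same} thermostat and is homotopic to $c^{-1}$, so $\nu_{E^*}([\overline{c}])=-\nu_{E^*}([c])<0$, contradicting Lemma \ref{lemma:maslov-index-counting}. To repair your argument you would need a substitute for this homotopy invariance that converts near-returns into an exact, orientation-sensitive count; the crossing bound $N_0$ does not provide one.
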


\begin{remark}
%Strictly speaking, 
The non-wandering set $\Omega$ is a flow-invariant subset of $SM$, not $M$, so when we say it contains no conjugate points we mean that, for any $v \in \Omega$, there are no conjugate points along the thermostat geodesic $t \mapsto \pi(\varphi_t(v))$.
\end{remark}

%To prove this, we use a technique dating back to Mañé \cite{mane87}. 
The proof crucially depends on the reversibility of the flow, and we provide an example illustrating why the argument fails for non-reversible thermostats. Note that this does not imply that the result is false in the non-reversible case, but rather that a different proof would be required. 

%This will actually be a corollary of the following. Notably, this result does not hold for irreversible thermsotats, so it could still be that Theorem 1.2, but the proof would have to be different.

%\[ \Omega \coloneq \{v \in SM \mid \forall \text{ neighborhoods } U \ni x \text{ and } T > 0, \exists t > T \text{ such that } \varphi_t(U) \cap U \neq \emptyset\}. \]
%The non-wandering set $\Omega$ is non-empty, closed, and invariant under $\varphi_t$. In the Anosov case, we have $\Omega=SM$. We want to show the following: 

\subsection*{Acknowledgements} We thank Gabriel Paternain for highlighting the importance of the reversibility assumption in the proof of Theorem \ref{theorem:dominated-splitting-theorem}. The first author was supported by a Harding Distinguished Postgraduate Scholarship, and the second author was supported by NSF DMS-2247747.

\section{Interior of thermostats without conjugate points}

\subsection{Preliminaries}

We briefly recall the set-up from \cite{echevarria-cuesta25b}. As previously, let $X$ be the geodesic vector field on $SM$ and let $V$ be the vertical vector field generating the circle action on the fibres. Define $H\coloneq[V,X]$. The vector fields $(X, H, V)$ form an orthonormal frame for $T(SM)$ with respect to the Sasaki metric, the natural lift of $g$ to $SM$. Dual to this moving frame is a moving frame for $T^*(SM)$, which we denote by $(\alpha, \beta, \psi)$. The \textit{cohorizontal subbundle} is defined as $\mathbb{H}^* \coloneq \mathbb{R} \beta$, whereas the \textit{covertical subbundle} is $\mathbb{V}^*\coloneq \mathbb{R}\psi$.%, where $\psi_\lambda \coloneq \psi - \lambda \alpha$. 

%The \emph{cohorizontal subbundle} is given by $\mathbb{H}^* \coloneq \mathbb{R} \beta$, and the \emph{tilted covertical subbundle} is given by $\mathbb{V}^*_\lambda \coloneq \mathbb{R}\psi_\lambda$, where $\psi_\lambda \coloneq \psi - \lambda \alpha$. 

The symplectic lift of $\{\varphi_t\}_{t\in \R}$ to $T^*(SM)$, given by 
\begin{equation}
    \tilde{\varphi}_t(v, \xi) \coloneqq\left(\varphi_t(v), d_v\varphi_t^{-\top} (\xi)\right), \qquad (v,\xi)\in T^*(SM),
\end{equation}
is the Hamiltonian flow of $\xi(F(v))$, so it preserves the \textit{characteristic set} $\Sigma$ with fibres
$$\Sigma(v) \coloneq \{\xi \in T_v^*(SM) \mid \xi(F(v)) = 0\}.$$
Note that usually the characteristic set of $F$ is defined without the zero section. Also, we use the notation $d\varphi_t^{-\top}$ to denote the inverse of the transpose
$$d_v\varphi_t^{\top} : T_v^*(SM) \rightarrow T_{\varphi_{-t}(v)}^*(SM), \qquad d_v\varphi_t^{\top}\xi \coloneq \xi \circ d_{\varphi_{-t}(v)}\varphi_{t}.$$

In the geodesic case, we have $\Sigma = \mathbb{H}^*\oplus \mathbb{V}^*$. For thermostats, we introduce the $1$-form $\psi_\lambda : = \psi -\lambda\alpha$ and the \textit{tilted covertical subbundle} $\mathbb{V}^*_\lambda \coloneq \R \psi_\lambda$ so that $\Sigma = \mathbb{H}^* \oplus \mathbb{V}^*_\lambda$.  The adapted coframe $(\alpha, \beta, \psi_\lambda)$ provides convenient coordinates on $\Sigma$. Indeed, for each $(v,\xi) \in \Sigma$, we have
\begin{equation}\label{eq:equations-of-motion}
    d_v\varphi_t^{-\top}(\xi)= x(t)\beta+y(t)\phi_p
\end{equation}
for some unique functions $x,y\in \mathcal{C}^\infty(\R)$. These functions capture all the information of the lifted dynamics in $\Sigma$. Letting $K_g$ be the Gaussian curvature associated to $g$, \cite[Lemma 2.1]{echevarria-cuesta25b} shows that the function $y$ satisfies the Jacobi equation
\begin{equation} \label{eqn:jacobi} \ddot{y} + V(\lambda)\dot{y} + \mathbb{K} y = 0,\end{equation}
where $\mathbb{K}\in \mathcal{C}^\infty(SM, \R)$ is the \textit{thermostat curvature} given by $$\mathbb{K} \coloneq \pi^* K_g - H\lambda + \lambda^2 + FV\lambda.$$
Furthermore, the thermostat $(M,g,\lambda)$ is without conjugate points if and only if every solution $y$ to the Jacobi equation \eqref{eqn:jacobi} vanishes at most once \cite[Corollary 2.6]{echevarria-cuesta25b}. If we consider the \textit{damping function}
\[ m(t) \coloneq \exp \left( - \frac{1}{2} \int_0^t V(\lambda)(\varphi_\tau(v)) d\tau \right),\]
then we can associate to each solution $y$ its \textit{damped} version $z(t) \coloneq y(t)/m(t)$. By \cite[Lemma 2.3]{echevarria-cuesta25b}, this function satisfies the Jacobi equation
\begin{equation} \label{eqn:jacobi-damped} \ddot{z} + \tilde{\kappa} z= 0,\end{equation}
where $\tilde{\kappa}\in \mathcal{C}^\infty(SM, \R)$ is the \textit{damped thermostat curvature} given by 
\begin{equation}\label{eq:definition-damped-curvature}
    \tilde{\kappa} \coloneq \mathbb{K} - FV\lambda/2 - (V\lambda)^2/4.
\end{equation}
These solutions still encode information about conjugate points. Since the damping function $m$ is nowhere-vanishing, a thermostat is without conjugate points if and only if every non-trivial solution to equation \eqref{eqn:jacobi-damped} vanishes at most once.

Let us quickly justify why the sets of interest are not empty when $M$ has genus $\geq 1$, even if the metric $g$ is not necessarily Anosov.

\begin{proposition}\label{proposition:always-anosov}
    For any closed oriented Riemannian surface $(M,g)$ of genus $\geq 2$, the set $A(M, g)$ is non-empty.
\end{proposition}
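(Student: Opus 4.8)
The plan is to produce an explicit $\lambda$ by realizing a negatively curved geodesic flow, up to time reparametrization, as a thermostat flow over $(M,g)$. Since $M$ has genus $\geq 2$, the uniformization theorem provides a smooth $u\in\mathcal{C}^\infty(M)$ such that $g_0:=e^{2u}g$ has constant Gaussian curvature $-1$; in particular the geodesic flow of $g_0$ on $S^{g_0}M$ is Anosov by the classical theorem of Anosov. I claim that $\lambda(x,v):=du(Jv)$, which is a smooth (indeed fiberwise linear, hence Gaussian) function on $SM$, does the job.

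The first step is the conformal-change computation. Writing $\nabla$ for the Levi-Civita connection of $g$ and using $\nabla^{g_0}_X Y = \nabla_X Y + (Xu)Y + (Yu)X - g(X,Y)\,\mathrm{grad}_g u$, let $c$ be a $g_0$-geodesic and let $\gamma$ be its reparametrization to unit $g$-speed. Because $|\dot c|_g$ is a constant multiple of $e^{-u}$ along $c$, this reparametrization is smooth with everywhere positive speed, and a short computation shows that the components of $\nabla_{\dot\gamma}\dot\gamma$ tangent to $\dot\gamma$ cancel, leaving
\[ \nabla_{\dot\gamma}\dot\gamma = du(J\dot\gamma)\,J\dot\gamma. \]
Thus $\gamma$ is a thermostat geodesic for $(M,g,\lambda)$, and reversing the construction shows that every thermostat geodesic of this $\lambda$ arises this way. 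Consequently, the diffeomorphism $\Psi : S^gM \to S^{g_0}M$, $v\mapsto e^{-u(\pi(v))}v$, conjugates the thermostat flow $\varphi^\lambda$ with a smooth, positive time reparametrization of the geodesic flow of $g_0$.

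Finally, the Anosov property is invariant under conjugation by a diffeomorphism and under smooth time reparametrization by a function bounded away from $0$; this follows, for instance, from the cone criterion of \cite[Proposition 5.1.7]{fisher19}, the reparametrizing function being bounded below by a positive constant on the compact manifold $S^gM$. Hence $\varphi^\lambda$ is Anosov, i.e., $\lambda\in A(M,g)$, and so $A(M,g)\neq\emptyset$.

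The only genuinely non-formal step is the conformal-change identity $\nabla_{\dot\gamma}\dot\gamma = du(J\dot\gamma)J\dot\gamma$; everything else is classical (uniformization, Anosov-ness of negatively curved geodesic flows) or a standard robustness property of the Anosov condition. One could instead try to verify directly that the damped thermostat curvature $\tilde\kappa$ of $(M,g,du\circ J)$ is negative everywhere and invoke \eqref{eqn:jacobi-damped} to build a cone field, but this requires a longer computation with the connection coefficients, so the reparametrization route seems cleaner.
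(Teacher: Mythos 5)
Your proof is correct, but it takes a genuinely different route from the paper. The paper works entirely inside the thermostat framework: using Gauss--Bonnet and Hodge theory it produces a $1$-form $\theta$ with $d\theta=(K_g-\overline{K})\mu_a$ for the (negative) average curvature $\overline{K}$, sets $\lambda=\theta$, observes that the resulting Gaussian thermostat has \emph{constant} negative thermostat curvature $\mathbb{K}=2\pi\chi(M)$, and concludes by Wojtkowski's criterion that negative thermostat curvature implies Anosov. You instead normalize the \emph{Riemannian} curvature via uniformization, verify the (correct) conformal-change identity $\nabla_{\dot\gamma}\dot\gamma=du(J\dot\gamma)J\dot\gamma$ for unit-$g$-speed reparametrizations of $g_0$-geodesics, and conclude via the classical Anosov theorem for hyperbolic surfaces together with invariance of the Anosov property under smooth conjugation and positive time change. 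Both arguments produce a reversible Gaussian thermostat; in fact your $\lambda=du\circ J$ is an instance of the paper's construction with $E=\mathrm{grad}_g u$, and its thermostat curvature is $\mathbb{K}=K_g-\Delta_g u=e^{2u}K_{g_0}=-e^{2u}<0$, so you could close your argument with Wojtkowski's criterion instead of the time-change lemma and skip the orbit-equivalence discussion entirely. What your route buys is a stronger structural statement --- the thermostat flow is smoothly orbit equivalent to a hyperbolic geodesic flow --- at the cost of importing uniformization and the time-change invariance of Anosov flows; for the latter, the appeal to the cone criterion is a little informal (the clean statement is the classical Anosov--Sinai result that a $\mathcal{C}^1$ positive time change of an Anosov flow is Anosov, the new invariant bundles being graphs over the old ones in the flow direction), but the fact is standard and this is not a gap.
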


\begin{proof}
    Define $f\coloneq K_g-2\pi \chi(M)$ and let $\mu_a$ be the area form on $M$. The Gauss-Bonnet theorem implies that
    $$\int_M f\mu_a = 0.$$
    Therefore, we can apply the de Rham theorem to write $f\mu_a = d\theta$ for some 1-form $\theta$ on $M$. Let $E$ be the vector field on $M$ characterized by $\iota_E \mu_a=\theta$. 
    Then, 
    \begin{equation*}
        \mathcal{L}_E\mu_a = d\iota_E\mu_a =d\theta=f\mu_a, 
    \end{equation*}
    so $\div_{\mu_a}E=f$. If we define $\lambda\in \mathcal{C}^\infty(SM, \R)$ by $$\lambda(x,v)\coloneq\theta_x(v)=g_x(E(x),J(v)),$$ then the thermostat curvature of the Gaussian thermostat $(M, g, \lambda)$ satisfies
    $$\mathbb{K}=\pi^*(K_g -\div_{\mu_a}E)=\pi^*(K_g-f)=2\pi\chi(M) <0.$$
    By \cite[Theorem 5.2]{wojtkowski00}, it follows that the thermostat is Anosov.
\end{proof}

\begin{remark}
    The proof shows that $\lambda$ can actually be chosen so that $(M, g, \lambda)$ is a Gaussian thermostat. In particular, this example is reversible. % by Lemma \ref{lem:reversibility-odd-fourier}. 
   % of the form $\lambda=\lambda_{-1}+\lambda_1$ such that the Gaussian thermostat $(M, g, \lambda)$ is Anosov.
\end{remark}

Combining this with \cite[Lemma 4.2]{echevarria-cuesta25b}, we get the following:

\begin{corollary}
    For any closed oriented Riemannian surface $(M,g)$ of genus $\geq 1$, the set $B(M, g)\cap D(M,g)$ is non-empty.
\end{corollary}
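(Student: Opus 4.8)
The plan is to split on the genus of $M$, since Proposition \ref{proposition:always-anosov} only covers genus $\geq 2$. In that case it produces a $\lambda \in \mathcal{C}^\infty(SM,\R)$ for which $(M,g,\lambda)$ is Anosov — and, by the accompanying remark, one may even take $\lambda$ to define a Gaussian thermostat. Two inclusions from the introduction then finish this case: an Anosov flow is projectively Anosov, so $A(M,g) \subseteq D(M,g)$; and an Anosov thermostat has no conjugate points, by the thermostat form of Klingenberg's theorem \cite[Lemma 4.1]{dairbekov07}, so $A(M,g) \subseteq B(M,g)$. Hence $\lambda \in B(M,g) \cap D(M,g) \neq \emptyset$.

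When $M$ has genus $1$, i.e. $M = \mathbb{T}^2$, this route is unavailable, since there are no Anosov thermostats on $\mathbb{T}^2$ (the fundamental group of the unit tangent bundle $\mathbb{T}^3$ has polynomial growth), so $A(\mathbb{T}^2,g) = \emptyset$. Instead I would invoke \cite[Lemma 4.2]{echevarria-cuesta25b}, which supplies a $\lambda$ with $(\mathbb{T}^2,g,\lambda)$ projectively Anosov, so that $\lambda \in D(\mathbb{T}^2,g)$; it then remains to check $\lambda \in B(\mathbb{T}^2,g)$. By \cite[Corollary 2.6]{echevarria-cuesta25b} together with the fact that the damping function is nowhere vanishing, this is equivalent to showing that every non-trivial solution $z$ of the damped Jacobi equation \eqref{eqn:jacobi-damped} vanishes at most once along each orbit. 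I would obtain this from the explicit construction in \cite[Lemma 4.2]{echevarria-cuesta25b}: if it can be arranged so that the damped thermostat curvature $\tilde{\kappa}$ is non-positive on $SM$, then $\ddot{z} = -\tilde{\kappa}\, z \geq 0$ wherever $z \geq 0$, so $z$ is convex between consecutive zeros and cannot vanish twice; alternatively, if the example is reversible with $\Omega = SM$, the claim is immediate from Theorem \ref{theorem:dominated-splitting-theorem}.

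The main obstacle is precisely this last verification in the genus $1$ case. If \cite[Lemma 4.2]{echevarria-cuesta25b} already records the no-conjugate-points property of its example, the corollary follows immediately; otherwise one must return to the explicit parameters of that construction to read off either the sign of $\tilde{\kappa}$ or the recurrence structure of the flow, and then apply the appropriate criterion above. By contrast, the genus $\geq 2$ half is a purely formal consequence of Proposition \ref{proposition:always-anosov} and the inclusions $A(M,g) \subseteq B(M,g) \cap D(M,g)$.
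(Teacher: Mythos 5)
Your proposal is correct and follows essentially the same route as the paper, which simply combines Proposition \ref{proposition:always-anosov} (genus $\geq 2$, via $A(M,g)\subseteq B(M,g)\cap D(M,g)$) with the torus examples of \cite[Lemma 4.2]{echevarria-cuesta25b}; the verification you flag in the genus-$1$ case is deferred entirely to that cited lemma, exactly as in the paper. One small caution: your fallback via Theorem \ref{theorem:dominated-splitting-theorem} would not suffice even if the example were reversible, since the paper notes that the Section 4 examples of \cite{echevarria-cuesta25b} have $\Omega \neq S\mathbb{T}^2$, and that theorem only excludes conjugate points in the non-wandering set.
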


%For any closed oriented Riemannian surface $(M,g)$ of genus $\geq 2$, there exists $\lambda\in \mathcal{C}^\infty(SM, \R)$ of the form $\lambda=\lambda_{-1}+\lambda_1$ such that the Gaussian thermostat $(M, g, \lambda)$ is Anosov.

\subsection{The index form}

We now want to introduce a thermostat index form that parallels the classical index form from Riemannian geometry. Let $P\mathcal{C}^2$ denote the set of piecewise $\mathcal{C}^2$ functions. For a given $v \in SM$, we define the \textit{thermostat index form} $I_v : P \mathcal{C}^2([-T,T], \R) \rightarrow \mathbb{R}$ by
\begin{equation} \label{eqn:damped-index} I_v(f) \coloneq \int_{-T}^T \left( (\dot{f}(t))^2 - \tilde{\kappa}_v(t) (f(t))^2\right) dt, 
\end{equation}
where the parameter $T > 0$ is understood from context and $\tilde{\kappa}_v(t) \coloneq \tilde{\kappa}(\varphi_t(v))$. Our first step toward proving Theorem \ref{theorem:c2-interior} is to show that this index form continues to detect the presence of conjugate points. The proof of the following is a standard exercise in Riemannian geometry (for example, see \cite[Lemma 5.7]{herreros10}), but we sketch it here for the reader's convenience.

\begin{proposition} \label{propn:index}
    Let $(M,g,\lambda)$ be a thermostat, and let $v \in SM$ and $T > 0$ be such that the curve $t \mapsto \pi(\varphi_t(v))$ has no conjugate points for $t \in [-T,T]$. Let $z$ be a solution to the Jacobi equation \eqref{eqn:jacobi-damped} and let $f \in P \mathcal{C}^2([-T,T], \R)$. If $z(-T) = f(-T) = 0$ and $z(T) = f(T)$, then $I_v(z) \leq I_v(f)$, with equality if and only if $f = z$.
\end{proposition}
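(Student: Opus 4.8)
The plan is to mimic the classical Riemannian argument, the only new feature being that the ``kinetic'' term in the index form is the ordinary $\int \dot f^2$ (thanks to having passed to the damped Jacobi equation \eqref{eqn:jacobi-damped}), so the computation is essentially the one for the Sturm--Liouville operator $-\tfrac{d^2}{dt^2}-\tilde\kappa_v$. First I would record that, since the curve has no conjugate points on $[-T,T]$, the solution $z$ does not vanish on the half-open interval $(-T,T]$: indeed $z(-T)=0$ together with a second zero in $(-T,T]$ would exhibit conjugate points, while if $z\equiv 0$ then $f(T)=z(T)=0$ and the claim degenerates (one still has to handle this boundary case, see below). So assume $z$ is nowhere zero on $(-T,T]$.

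The key step is the substitution $f = z\cdot h$ on $(-T,T]$, where $h := f/z$ is piecewise $\mathcal C^2$ on $(-T,T]$ and extends continuously to $t=-T$ with $h(-T)=\dot f(-T)/\dot z(-T)$ by l'Hôpital (here one uses $\dot z(-T)\neq 0$, which follows from $z(-T)=0$ and $z\not\equiv 0$). Then $\dot f = \dot z\, h + z\,\dot h$, so
\begin{equation*}
\dot f^2 - \tilde\kappa_v f^2 = \dot z^2 h^2 + 2 z\dot z h\dot h + z^2\dot h^2 - \tilde\kappa_v z^2 h^2.
\end{equation*}
Using \eqref{eqn:jacobi-damped} in the form $\ddot z = -\tilde\kappa_v z$, one checks that $\dot z^2 h^2 - \tilde\kappa_v z^2 h^2 = \dot z^2 h^2 + z\ddot z h^2 = \frac{d}{dt}\big(z\dot z h^2\big) - 2 z\dot z h\dot h$, hence the integrand becomes a total derivative plus a manifestly nonnegative term:
\begin{equation*}
\dot f^2 - \tilde\kappa_v f^2 = \frac{d}{dt}\!\left(z\dot z\, h^2\right) + z^2 \dot h^2.
\end{equation*}
Integrating over $[-T,T]$ (carefully: over each subinterval of a common partition refining the breakpoints of $f$, then summing, the boundary contributions at interior breakpoints cancelling because $f$, hence $h$, is continuous there) gives
\begin{equation*}
I_v(f) = \Big[z\dot z\, h^2\Big]_{-T}^{T} + \int_{-T}^T z^2\dot h^2\, dt.
\end{equation*}
At $t=T$ we have $h(T)=f(T)/z(T)=1$, so the boundary term is $z(T)\dot z(T)$; at $t=-T$ the term $z\dot z h^2$ vanishes because $z(-T)=0$ (and $h$ is bounded near $-T$). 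On the other hand, applying the same identity to $f=z$ (where $h\equiv 1$, $\dot h\equiv 0$) yields $I_v(z) = z(T)\dot z(T)$. Subtracting, $I_v(f) - I_v(z) = \int_{-T}^T z^2\dot h^2\, dt \geq 0$, with equality iff $\dot h\equiv 0$ on $(-T,T]$, i.e. $h$ is constant; since $h(T)=1$ this forces $f = z$ on $(-T,T]$, and then $f(-T)=0=z(-T)$ gives $f=z$ on all of $[-T,T]$.

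The main obstacle is purely bookkeeping rather than conceptual: justifying the l'Hôpital limit and the boundedness of $h=f/z$ near the simple zero $t=-T$ (using $f(-T)=0$ and $\dot z(-T)\neq 0$), and handling the piecewise-$\mathcal C^2$ regularity so that the integration by parts is legitimate and the interior boundary terms telescope away. I would also dispatch at the outset the degenerate case in which $z$ is the trivial solution: then $z(T)=0=f(T)$, $z(-T)=0=f(-T)$, and $I_v(z)=0$, while $I_v(f)=\int_{-T}^T(\dot f^2-\tilde\kappa_v f^2)\,dt$; here one instead invokes that the absence of conjugate points on $[-T,T]$ makes the quadratic form $I_v$ positive definite on the space of piecewise $\mathcal C^2$ functions vanishing at both endpoints (a standard consequence, e.g. via comparison with a positive solution of \eqref{eqn:jacobi-damped} using the same substitution trick with a nonvanishing solution in place of $z$), so $I_v(f)\geq 0 = I_v(z)$ with equality iff $f\equiv 0 = z$.
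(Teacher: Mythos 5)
Your proof is correct and takes essentially the same route as the paper: the paper substitutes $f = qw$ where $w$ is the solution of \eqref{eqn:jacobi-damped} normalized by $w(-T)=0$, $w(T)=1$, and integrates by parts to reach $I_v(f)=I_v(z)+\int_{-T}^{T}(\dot q w)^2\,dt$, which is exactly your Picone-type identity since $w$ and $z$ are proportional. The only (minor) difference is that dividing by the normalized $w$ instead of by $z$ itself lets the paper avoid your separate treatment of the degenerate case $z\equiv 0$, while you are more explicit than the paper about the boundedness of the quotient near the simple zero at $t=-T$.
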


\begin{proof}[Sketch of proof]
    We prove the result assuming that $f \in \mathcal{C}^2([-T,T], \R)$. The proof for the piecewise setting is essentially the same, but the notation becomes cumbersome. Since the curve is without conjugate points, let $w$ be the unique solution to the Jacobi equation \eqref{eqn:jacobi-damped} with $w(-T) = 0$ and $w(T) = 1$, and let $q \in \mathcal{C}^2([-T,T], \R)$ be such that $qw = f$. Using equation \eqref{eqn:jacobi-damped}, we have
    \begin{equation} \label{eqn:lem-index-step1} \dot{f} = \dot{q}w + q \dot{w} \qquad \text{and}\qquad \ddot{f} = \ddot{q} w + 2 \dot{q} \dot{w} - \tilde{\kappa}f.\end{equation}
    Also note that $z = q(T)w$, so $z(T) = q(T)$ and $\dot{z}(T) = q(T) \dot{w}(T)$. Using equation \eqref{eqn:jacobi-damped} again, we get
    \begin{equation} \label{eqn:lem-index-step2} 
    I_v(z) = z(T) \dot{z}(T) = (q(T))^2 \dot{w}(T) = f(T) q(T) \dot{w}(T).
    \end{equation}
    Using integration by parts, we obtain
    \[ \begin{split} I_v(f) &= \int_{-T}^T \left( (\dot{f}(t))^2 - \tilde{\kappa}_v(t) (f(t))^2\right) dt \\
    &= f(T) \dot{f}(T) -f(-T) \dot{f}(-T) -  \int_{-T}^{T}  f(t)\left(\ddot{f}(t) +   \tilde{\kappa}_v(t) f(t)\right)dt \\
    & = f(T) \dot{q}(T) w(T) + f(T) q(T) \dot{w}(T) - f(-T) \dot{q}(-T) w(-T) \\
    & \quad \quad - f(-T) q(-T) \dot{w}(-T) - \int_{-T}^{T}  f(t)\left(\ddot{f}(t) +   \tilde{\kappa}_v(t) f(t)\right)dt.\end{split}\]
    With equations \eqref{eqn:lem-index-step1}, we can rewrite the integral on the right-hand side as
    \[\begin{split}   \int_{-T}^{T}  f(t)\left(\ddot{f}(t) +   \tilde{\kappa}_v(t) f(t)\right) dt &=   \int_{-T}^{T}  f(t)\left(\ddot{q}(t)w(t) + 2 \dot{q}(t)\dot{w}(t)\right)dt \\
    & = \int_{-T}^{T}  f(t)\left( \frac{d}{dt}(\dot{q}(t)w(t)) +  \dot{q}(t)\dot{w}(t)\right)dt.\end{split}\]
    Integration by parts again yields
    \[ \begin{split}   \int_{-T}^{T}  f(t)\left(\ddot{f}(t) +   \tilde{\kappa}_v(t) f(t)\right) dt = &f(T) \dot{q}(T) w(T) - f(-T)\dot{q}(-T) w(-T) \\ & +\int_{-T}^{T} \dot{q}(t)\left(  f(t) \dot{w}(t)-\dot{f}(t) w(t) \right) dt. \end{split}\] 
    Putting the above together with equations \eqref{eqn:lem-index-step2}, we obtain
    \[ I_v(f) = I_v(z) + \int_{-T}^{T} \dot{q}(t)\left(\dot{f}(t) w(t) - f(t)  \dot{w}(t) \right) dt.\]
    Now, use equations \eqref{eqn:lem-index-step1} again to get that 
    \[ \dot{f} \dot{q} w =  \left( \dot{q} w\right)^2 + q \dot{q} w \dot{w} \qquad \text{and} \qquad f \dot{q} \dot{w} = q \dot{q} w \dot{w}, \]
    and hence
    $$I_v(f) = I_v(z) + \int_{-T}^{T} (\dot{q}(t) w(t))^2  dt.$$
    %\[ \int_{-T}^{T}\dot{q}(t)\left(\dot{f}(t)  w(t) - f(t) \dot{w}(t) \right) dt = \int_{-T}^{T} (\dot{q}(t) w(t))^2  dt \geq 0. \]
    Thus, we have $I_v(f) \geq I_v(z)$. Note that equality holds if and only if
    \[ \int_{-T}^{T} (\dot{q}(t)w(t))^2 dt = 0.\]
    Since $w(t) \neq 0$ for all $t \in (-T, T]$ by the no-conjugate-points assumption, we deduce that $\dot{q}= 0$. This implies that $q(t) = q(T)$ for all $t \in [-T,T]$ and hence $f = z$.
\end{proof}

A consequence of Proposition \ref{propn:index} is the following criteria for a thermostat to be without conjugate points.

\begin{lemma} \label{lem:index}
    A thermostat $(M, g, \lambda)$ has no conjugate points if and only if for every $v \in SM$, $T > 0$ and $f \in P\mathcal{C}^2([-T,T], \R)$ with $f(-T) = f(T) = 0$, we have $I_v(f) \geq 0$, with equality if and only if $f$ is zero.
\end{lemma}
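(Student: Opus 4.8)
The plan is to prove both directions using Proposition~\ref{propn:index} as the central tool, together with the characterization (via the damped Jacobi equation~\eqref{eqn:jacobi-damped}) that a thermostat has no conjugate points if and only if every nontrivial solution $z$ vanishes at most once.

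For the forward direction, suppose $(M,g,\lambda)$ has no conjugate points. Fix $v\in SM$, $T>0$, and $f\in P\mathcal{C}^2([-T,T],\R)$ with $f(-T)=f(T)=0$. Since the curve $t\mapsto\pi(\varphi_t(v))$ has no conjugate points on $[-T,T]$, we may apply Proposition~\ref{propn:index} with $z\equiv 0$ (the solution to~\eqref{eqn:jacobi-damped} vanishing at both $-T$ and $T$, which is unique by the no-conjugate-points hypothesis), noting that $z(-T)=f(-T)=0$ and $z(T)=0=f(T)$. The proposition gives $0=I_v(0)\le I_v(f)$, with equality if and only if $f=z\equiv 0$. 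This is exactly the asserted inequality and rigidity statement.

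For the converse, I would argue by contraposition: suppose the thermostat has a pair of conjugate points, so there exist $v\in SM$ and $T>0$ such that~\eqref{eqn:jacobi-damped} has a nontrivial solution $z$ with $z(-T)=z(T)=0$ (after reparametrizing so the conjugate pair is symmetric about $0$; alternatively work on $[0,2T]$ and translate). The standard move is then to exhibit an admissible test function $f$ with $f(-T)=f(T)=0$ and $I_v(f)<0$, or at least $I_v(f)\le 0$ with $f$ not a multiple of $z$, violating the stated criterion. The natural candidate is a perturbation of $z$ itself: since $I_v(z)=z(T)\dot z(T)-z(-T)\dot z(-T)=0$ by integration by parts using~\eqref{eqn:jacobi-damped}, and $z$ is a \emph{degenerate} critical point of $I_v$ on the space of functions vanishing at the endpoints, one can push slightly past the first conjugate point: extend and truncate so that the interval strictly contains a conjugate pair, then the classical index-form argument (bump the function near the conjugate point, or use the solution $w$ as in the proof of Proposition~\ref{propn:index} to build a function with strictly negative index) produces $f$ with $I_v(f)<0$. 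Concretely, if $t_0\in(-T,T)$ is such that $-T$ and $t_0$ are conjugate along the same orbit segment, let $z_1$ solve~\eqref{eqn:jacobi-damped} with $z_1(-T)=0$, $z_1(t_0)=0$, set $f=z_1$ on $[-T,t_0]$ and $f=0$ on $[t_0,T]$; then $I_v(f)=0$ but $f$ is piecewise $\mathcal{C}^2$, not $\mathcal{C}^2$, and not a solution, so a further smoothing of the corner at $t_0$ strictly decreases $I_v$ below $0$, contradicting the hypothesis.

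The main obstacle is the converse direction, specifically the corner-smoothing step: one must verify that rounding the kink of the piecewise-Jacobi test function $f$ at $t_0$ genuinely lowers the index form, which is the thermostat analogue of the classical fact that broken Jacobi fields are not minimizers of the index form past a conjugate point. I expect this to follow from a second-variation computation — replacing $f$ near $t_0$ by $f+\varepsilon h$ for a suitable fixed bump $h$ supported near $t_0$ with $h(t_0)=0$ but $\dot h$ absorbing the jump in $\dot f$, and checking that the linear term in $\varepsilon$ is negative because of the mismatch $\dot f(t_0^-)\ne\dot f(t_0^+)=0$. Alternatively, and perhaps more cleanly, one can avoid corners entirely by invoking the strict inequality clause of Proposition~\ref{propn:index} directly: on a slightly larger symmetric interval $[-T',T']$ with $T'>T$ chosen so that the orbit still has $-T'$ conjugate to some interior point, the zero solution and the genuine Jacobi field $z_1$ have the same endpoint data only in a degenerate way, and comparing $I_v$ of the truncated-then-smoothed competitor against $I_v$ of the relevant Jacobi field forces a sign. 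Either route reduces the claim to the well-understood local picture near a conjugate point, so no new ideas beyond Proposition~\ref{propn:index} are needed.
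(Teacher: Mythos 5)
Your forward direction is exactly the paper's: apply Proposition~\ref{propn:index} with $z\equiv 0$ to get $I_v(f)\ge I_v(0)=0$ with equality iff $f=0$. For the converse, you have in fact already finished the proof at the moment you write down the broken Jacobi field ($f=z_1$ on $[-T,t_0]$, $f=0$ on $[t_0,T]$) and compute $I_v(f)=0$ by integration by parts: this $f$ is a \emph{nonzero} element of $P\mathcal{C}^2([-T,T],\R)$ vanishing at both endpoints with $I_v(f)=0$, and that alone violates the clause ``with equality if and only if $f$ is zero.'' This is precisely the paper's argument. Everything after that point in your write-up --- the corner-smoothing at $t_0$, which you single out as the main obstacle --- is not needed for this lemma; you have misidentified where the contradiction comes from, since the statement being contradicted is the rigidity of the equality case, not the nonnegativity. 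The strict inequality $I_v(f)<0$ \emph{is} needed elsewhere (it is the content of the paper's Lemma~\ref{lem:neg-index}, used to show $B(M,g)$ is $\mathcal{C}^2$-closed), and there it is obtained by the first-order perturbation $f\pm\epsilon q$ with $q(t)=\dot f(t^-)$, which is close in spirit to the second-variation computation you sketch; but for the present lemma you should simply invoke the equality clause and delete the smoothing discussion. One cosmetic point: rather than reparametrizing so the conjugate pair sits at $\pm T$, it is cleaner (and what the paper does) to keep the pair at $a<b$ inside $[-T,T]$ and extend by zero, which stays within $P\mathcal{C}^2$.
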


\begin{comment}
\begin{lemma} \label{lem:index}
    Let $(M,g)$ be a closed oriented Riemannian surface. The following are equivalent:
    \begin{enumerate}
        \item $\lambda \in B(M,g)$,
        \item for every $v \in SM$, $T > 0$, and $f \in P\mathcal{C}^2([-T,T], \R)$ with $f(T) = f(-T) = 0$, we have that the thermostat index form for $(M,g,\lambda)$ satisfies $I_v(f) \geq 0$, with equality if and only if $f$ is zero.
    \end{enumerate}
\end{lemma}
\end{comment}

\begin{proof}
If $(M, g, \lambda)$ has no conjugate points, then the only solution to the Jacobi equation \eqref{eqn:jacobi-damped} with ${z(-T) = z(T) = 0}$ is the trivial solution, so the claim follows from Proposition \ref{propn:index}. For the other direction, we prove the contrapositive. Suppose $(M, g, \lambda)$ admits a pair of conjugate points. By \cite[Corollary 2.6]{echevarria-cuesta25b}, there exists $v \in SM$ and $T > 0$ such that there is a non-trivial solution $z$ to the Jacobi equation \eqref{eqn:jacobi-damped} with $z(a) = z(b) = 0$ and $[a,b] \subseteq [-T,T]$. Let $f \in P\mathcal{C}^2([-T,T], \R)$ be such that $f = z$ on $[a,b]$ and $f=0$ otherwise. Integration by parts then yields
\[ I_v(f) = \int_{a}^b \left((\dot{z}(t))^2 - \tilde{\kappa}_v(t) (z(t))^2\right)dt = z(b) \dot{z}(b) - z(a) \dot{z}(a) = 0. \qedhere \]
\end{proof}

A slight modification to the above argument gives us the following useful criteria.

\begin{lemma} \label{lem:neg-index}
    If a thermostat $(M, g, \lambda)$ admits a pair of conjugate points, then there exist $v \in SM$, $T > 0$ and $f \in P\mathcal{C}^2([-T,T], \R)$ such that $I_v(f) < 0$. 
\end{lemma}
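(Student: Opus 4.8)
The plan is to adapt, almost verbatim, the construction used in the proof of Lemma~\ref{lem:index}, exploiting the fact that the Jacobi field extended by zero there fails to be $\mathcal{C}^2$ at the second conjugate instant. In other words, we are detecting the standard ``conjugate point strictly in the interior'' phenomenon: once a conjugate pair appears, one has room on one side to produce a genuine negative direction for the index form. The mechanism is that the extended field produces a corner, and the corner term in the first variation of $I_v$ is nonzero.

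Concretely, I would first invoke \cite[Corollary 2.6]{echevarria-cuesta25b} to obtain $v \in SM$, real numbers $a < b$, and a non-trivial solution $z$ of \eqref{eqn:jacobi-damped} with $z(a) = z(b) = 0$; uniqueness of solutions to the linear ODE \eqref{eqn:jacobi-damped} forces $\dot z(b) \neq 0$. Pick any $T > 0$ with $[a,b] \subseteq (-T,T)$. Let $f_0 \in P\mathcal{C}^2([-T,T],\R)$ be equal to $z$ on $[a,b]$ and to $0$ on $[-T,a]\cup[b,T]$, and fix a smooth $h$ supported in a small interval $(b-\delta,b+\delta) \subseteq (a,T)$ with $h(b) = 1$. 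For $s \in \R$ set $f_s := f_0 + sh \in P\mathcal{C}^2([-T,T],\R)$. Expanding the integrand of \eqref{eqn:damped-index} gives $I_v(f_s) = I_v(f_0) + 2s\int_{-T}^T(\dot f_0\dot h - \tilde\kappa_v f_0 h)\,dt + s^2 I_v(h)$. Integration by parts over $[a,b]$ together with \eqref{eqn:jacobi-damped} yields $I_v(f_0) = z(b)\dot z(b) - z(a)\dot z(a) = 0$, exactly as in Lemma~\ref{lem:index}. For the cross term, since $h$ is supported near $b$, where $f_0 = z$ on the left and $f_0 = 0$ on the right, the same integration by parts (now carried out only on $[b-\delta,b]$, where $z$ is $\mathcal{C}^2$) gives $\int_{-T}^T(\dot f_0\dot h - \tilde\kappa_v f_0 h)\,dt = \dot z(b)h(b) = \dot z(b)$. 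Hence $I_v(f_s) = 2s\,\dot z(b) + C s^2$ with $C := I_v(h)$ a finite constant, and choosing $s$ small with $s\,\dot z(b) < 0$ (and $|s|$ below $2|\dot z(b)|/|C|$ if $C \neq 0$) makes $I_v(f_s) < 0$. Taking $f := f_s$ for such $s$ finishes the argument; note that $f$ even vanishes identically near $\pm T$.

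I do not expect a genuine obstacle here: the only points requiring care are bookkeeping ones, namely that the perturbation $h$ is localized strictly inside $(-T,T)$ and away from $a$ (so that $f_s$ remains in $P\mathcal{C}^2$, with corners only at $a$ and $b$), and that every integration by parts is performed on a subinterval where the functions involved are genuinely $\mathcal{C}^2$, i.e.\ away from the corner of $f_0$ at $b$. The heart of the matter is simply that the linear term $2\dot z(b)$ is nonzero, which is precisely where the strict inequality improves on the ``$I_v \geq 0$ with equality iff $f = 0$'' of Lemma~\ref{lem:index}.
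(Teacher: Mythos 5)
Your argument is correct, and it follows the same overall strategy as the paper -- start from the broken Jacobi field $f_0$ (which has $I_v(f_0)=0$ by the computation in Lemma~\ref{lem:index}) and perturb it so that the term linear in the perturbation parameter is nonzero -- but with a different, and in fact more robust, choice of perturbation. The paper perturbs by $q(t):=\dot f_0(t^-)$, i.e.\ by $\dot z$ on all of $[a,b]$; the resulting linear term works out to $\mp 2\epsilon\bigl(\dot z(a)^2-\dot z(b)^2\bigr)$, which vanishes whenever $|\dot z(a)|=|\dot z(b)|$ (e.g.\ in the constant-curvature model $z=\sin$), so the sign choice alone does not always produce a strictly negative value and one would have to fall back on a further perturbation in that degenerate case. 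Your perturbation by a bump $h$ localized at the single corner $b$ is the classical Morse-theoretic corner-breaking argument: the linear term is $2s\,\dot z(b)h(b)=2s\,\dot z(b)$, and $\dot z(b)\neq 0$ is automatic from uniqueness for the linear ODE \eqref{eqn:jacobi-damped}, so an appropriate small $s$ always works. Your bookkeeping is also sound: $f_s$ stays in $P\mathcal{C}^2$ with corners only at $a$ and $b$, every integration by parts is carried out on an interval where the integrand is smooth, and the quadratic term $s^2 I_v(h)$ is controlled by taking $|s|$ small. In short, the proposal proves the lemma, and its choice of perturbation direction avoids a degenerate case that the paper's own proof does not explicitly address.
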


\begin{proof}
If $(M,g,\lambda)$ admits a pair of conjugate points and $f$ is the piecewise function from the proof of Lemma \ref{lem:index}, then define $q \in P \mathcal{C}^2([-T,T], \R)$ by $q(t) \coloneq  \dot{f}(t^-)$ and set $f_\varepsilon^\pm \coloneq f \pm \varepsilon q$ for some $\varepsilon>0$. Observe that the Jacobi equation \eqref{eqn:jacobi-damped} and the proof of Lemma \ref{lem:index} yield
\[ \begin{split}  I_v(f_\varepsilon^\pm) &= \pm 2\varepsilon \int_a^b \left( \dot{z}(t) \ddot{z}(t) - \tilde{\kappa}_v(t) z(t) \dot{z}(t)  \right)  dt + O(\varepsilon^2) \\
& = \mp 4\varepsilon \int_a^b \tilde{\kappa}_v(t) z(t) \dot{z}(t) dt + O(\varepsilon^2).\end{split}\]
After appropriately choosing the sign, we have $I_v(f_\varepsilon^\pm) < 0$ for $\varepsilon > 0$ small enough
\end{proof}

As a consequence, we can deduce the following observation.

\begin{proposition}\label{proposition:b-is-closed}
    The set $B(M,g)$ is closed in the $\mathcal{C}^k$-topology for all $k \geq 2$.
\end{proposition}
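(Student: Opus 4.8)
The strategy is to verify that the complement $\mathcal{C}^\infty(SM,\R)\setminus B(M,g)$ is open in the $\mathcal{C}^2$ topology; since the $\mathcal{C}^k$ topology is finer for every $k\geq 2$, this immediately yields the statement for all such $k$. So fix $\lambda_0\notin B(M,g)$. Following the proofs of Lemmas \ref{lem:index} and \ref{lem:neg-index}, I would use \cite[Corollary 2.6]{echevarria-cuesta25b} to choose $v\in SM$, an interval $[a,b]$, and a non-trivial solution $z$ of \eqref{eqn:jacobi-damped} (for $\lambda_0$) with $z(a)=z(b)=0$; then I would pick $T>0$ with $[a,b]\subseteq(-T,T)$, let $f\in P\mathcal{C}^2([-T,T],\R)$ agree with $z$ on $[a,b]$ and vanish elsewhere, set $q(t):=\dot f(t^-)$, and put $h:=f+\epsilon q$ for a suitable choice of sign and small $\epsilon>0$, so that $I^{\lambda_0}_v(h)<0$. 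The point of taking $[a,b]$ strictly interior to $[-T,T]$ is that $f$ and $q$ then vanish identically near $\pm T$, so $h\in P\mathcal{C}^2([-T,T],\R)$ still satisfies $h(\pm T)=0$ and is therefore an admissible competitor in Lemma \ref{lem:index}.

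Next I would fix this $v$, $T$, and $h$ once and for all and show that $\lambda\mapsto I^{\lambda}_v(h)=\int_{-T}^T\big(\dot h(t)^2-\tilde\kappa(\varphi_t(v))\,h(t)^2\big)\,dt$ is continuous at $\lambda_0$ in the $\mathcal{C}^2$ topology. Indeed, by \eqref{eq:definition-damped-curvature} the damped curvature $\tilde\kappa=\pi^*K_g-H\lambda+\lambda^2+FV\lambda/2-(V\lambda)^2/4$ involves $\lambda$ only through its derivatives of order at most two, so $\lambda\mapsto\tilde\kappa$ is continuous from $\mathcal{C}^2(SM,\R)$ into $\mathcal{C}^0(SM,\R)$; and the flow $\varphi_t$, being generated by $F=X+\lambda V$, depends continuously on $\lambda$ (already in the $\mathcal{C}^1$ topology) uniformly for $t$ in the compact interval $[-T,T]$. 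Hence $\tilde\kappa\circ\varphi_t(v)$ converges in $\mathcal{C}^0([-T,T])$ as $\lambda\to\lambda_0$ in $\mathcal{C}^2$, and the integral converges accordingly.

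Combining the two steps, there is a $\mathcal{C}^2$-neighborhood $\mathcal{U}$ of $\lambda_0$ on which $I^{\lambda}_v(h)<0$; since $h(\pm T)=0$, the criterion of Lemma \ref{lem:index} fails for every $\lambda\in\mathcal{U}$, so each such $\lambda$ admits a pair of conjugate points, i.e.\ $\mathcal{U}\cap B(M,g)=\emptyset$. This shows $B(M,g)$ is closed in the $\mathcal{C}^2$ topology, hence in $\mathcal{C}^k$ for all $k\geq 2$.

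I expect the only real care to be needed in the first step: one must arrange that the perturbed competitor $h$ coming from the proof of Lemma \ref{lem:neg-index} still meets the endpoint hypothesis of Lemma \ref{lem:index}, which is precisely why $T$ is chosen with room to spare, and one must keep track of the piecewise-$\mathcal{C}^2$ regularity of $h$ at the corners $a,b$. The continuous dependence in the second step is standard, the slight subtlety being that it holds uniformly in $v\in SM$, which follows from compactness of $SM$ and of $[-T,T]$ together with smooth dependence of the curvature and of the flow on $\lambda$.
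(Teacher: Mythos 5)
Your proof is correct and follows essentially the same route as the paper: both reduce the statement to the index-form criterion of Lemmas \ref{lem:index} and \ref{lem:neg-index} together with the $\mathcal{C}^2$-continuity of $\lambda \mapsto I_v^\lambda(h)$, the only difference being that you phrase the argument as openness of the complement with a fixed witness $h$, whereas the paper phrases it as sequential closedness. Your version is in fact slightly more careful, since it explicitly tracks the dependence on $\lambda$ of the flow $\varphi_t$ (and not only of $\tilde{\kappa}$) inside the index form, a point the paper's proof passes over.
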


\begin{proof}
    If $\{\lambda_n\}_{n\in \N} \subset B(M,g)$ is a sequence of functions such that $\lambda_n$ converges to $\lambda$ in the $\mathcal{C}^\infty$-topology, then the corresponding damped thermostat curvatures for each $\lambda_n$ converge to the damped thermostat curvature of $\lambda$ in the $\mathcal{C}^\infty$-topology, and so, for any fixed $v \in SM$ and $T > 0$ the corresponding thermostat index forms converge. Denoting the index form for $(M, g,\lambda)$ by $I$, we see that $I_v(f) \geq 0$ for every $f \in P \mathcal{C}^2([-T,T], \R)$, and the contrapositive of Lemma \ref{lem:neg-index} shows that $\lambda \in B(M,g)$.
\end{proof}

%Notice that if we fix $v$ and $T$, then the only term in the thermostat index form which depends on the underlying thermostat system is the damped thermostat curvature. In particular, let $(\lambda_n)_{n \in \mathbb{N}} \subseteq B(M,g)$ be a sequence such that $\lambda_n$ converges to a function $\lambda$ in the $\mathcal{C}^\infty$-topology. From the definition, we have that the corresponding damped thermostat curvatures for $\lambda_n$ converge to the damped thermostat curvature of $\lambda$ in the $\mathcal{C}^\infty$-topology, and so for any fixed $v \in SM$ and $T > 0$ the corresponding thermostat index forms converge. As a consequence of this, Lemma \ref{lem:index} shows the following.

%\begin{proposition}\label{proposition:b-is-closed}
   % The set $B(M,g)$ is closed in the $\mathcal{C}^\infty$-topology.
%\end{proposition}

%\begin{remark}
 %   While it is not useful for proof of Theorem \ref{theorem:c2-interior}, we note that it is easy to adapt to show that the converse of Lemma \ref{lem:index} also holds by adapting the argument from the geodesic setting. 
%\end{remark}

\subsection{Green bundles in the cotangent space}
The usual construction of the Green bundles in $T(SM)$ does not carry over to the thermostat setting due to dissipation. We instead define them in $T^*(SM)$, following \cite{echevarria-cuesta25b}. In \cite[Theorem 1.5]{echevarria-cuesta25b}, we showed that if a thermostat $(M,g,\lambda)$ has no conjugate points, then the subbundle $d\varphi_{t}^{-\top}(\mathbb{H}^*(\varphi_t(v))$ converges as $t\to \pm \infty$. The subbundle that it converges to in forward time is the \textit{stable Green bundle}, denoted by $G_s^*(v)$, and the bundle it converges to in reverse time is the \emph{unstable Green bundle}, denoted by $G_u^*(v)$. When we talk about convergence, we mean with respect to the Grassmannian topology of the projective bundle $\mathbb{P}(\Sigma)$ defined in Subsection \ref{subsection:maslov-index}.

Let $z_T$ be a solution to the Jacobi equation \eqref{eqn:jacobi-damped} satisfying
\begin{equation} \label{eqn:defn-zt}
z_T(T) = 0 \quad \text{ and } \quad z_T(0) = 1.
\end{equation}
The following is a consequence of \cite[Theorem 1.5, Lemma 2.4]{echevarria-cuesta25b} (see also \cite{green54}).

\begin{lemma}\label{lem:convergence-zt}
    Let $(M,g,\lambda)$ be a thermostat without conjugate points. The function $z_T$ is unbounded and $\dot{z}_T(0)$ converges as $T\to \pm \infty$.
\end{lemma}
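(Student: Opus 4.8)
The plan is to transplant the classical Green-bundle argument (Green \cite{green54}, as it appears for geodesic flows) into the damped Jacobi equation \eqref{eqn:jacobi-damped}, using the index form as the principal tool. First I would address unboundedness of $z_T$. Fix $T>0$ and suppose, for contradiction, that $z_T$ stays bounded on $[0,\infty)$ (the case $T<0$ is symmetric, running time backwards). The idea is a comparison between $z_T$ and the family $z_S$ with $S>T$: since the thermostat has no conjugate points, Proposition \ref{propn:index} gives a minimization property for solutions of \eqref{eqn:jacobi-damped} with prescribed boundary values, and one shows $|\dot z_T(0)|$ is monotone in $T$. Concretely, on an interval $[0,S]$ with $S>T$, build a competitor $f$ that equals $z_T$ on $[0,T]$ and is $0$ on $[T,S]$; comparing $I_v(f)$ with $I_v$ of the genuine solution agreeing with $z_T$ at $0$ and vanishing at $S$ forces a strict inequality on the derivatives at $0$, yielding that $\dot z_T(0)$ is strictly monotone (decreasing as $T\to+\infty$, say, after normalizing signs). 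If $z_T$ were bounded, one can then extract from the normalized solutions a bounded nonzero solution $z_\infty$ of \eqref{eqn:jacobi-damped} on all of $[0,\infty)$; but \cite[Lemma 2.4]{echevarria-cuesta25b} (a Hopf-type / Sturm argument that a no-conjugate-points equation has no nontrivial bounded solution on a half-line, or that the Green solutions are unbounded) contradicts this. Hence $z_T$ is unbounded.

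Next, convergence of $\dot z_T(0)$ as $T\to+\infty$. Using the index-form comparison above, $T\mapsto \dot z_T(0)$ is monotone on $(0,\infty)$ (with the sign convention fixed by \eqref{eqn:defn-zt}). So it suffices to show it is bounded. For this I would invoke \cite[Theorem 1.5]{echevarria-cuesta25b}: the subbundles $d\varphi_t^{-\top}(\mathbb{H}^*(\varphi_t(v)))$ converge as $t\to+\infty$ to the stable Green bundle $G_s^*(v)$, which is a genuine (one-dimensional) subbundle of $\Sigma(v)$, in particular transverse to $\mathbb{V}_\lambda^*$ is not required, but boundedness of its slope in the $\{\beta,\psi_\lambda\}$ coordinates is exactly what forces $\dot z_T(0)$ to stay in a compact set. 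Indeed, translating \eqref{eq:equations-of-motion} into the $z$-variable, the line $d\varphi_t^{-\top}(\mathbb{H}^*(\varphi_t(v)))$ corresponds to the solution of \eqref{eqn:jacobi-damped} vanishing at time $t$, i.e. (up to scaling) to $z_t$, and the "slope'' of that line at time $0$ is a fixed linear-fractional function of the pair $(z_t(0),\dot z_t(0))=(1,\dot z_t(0))$. Convergence of the bundle therefore gives convergence of $\dot z_T(0)$, and the monotonicity makes the limit two-sided as $T\to\pm\infty$ after the obvious sign bookkeeping.

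The main obstacle I anticipate is the careful sign and normalization bookkeeping that links three different encodings of the same one-parameter family: (i) the solution $z_T$ of the damped Jacobi equation normalized by \eqref{eqn:defn-zt}, (ii) the moving line $d\varphi_t^{-\top}(\mathbb{H}^*(\varphi_t(v)))\subset \Sigma(v)$ from \cite[Theorem 1.5]{echevarria-cuesta25b}, and (iii) the competitor functions fed into the index form $I_v$. Getting the monotonicity direction right (and checking that the index-form inequality of Proposition \ref{propn:index} is strict in the relevant place, so that monotonicity is strict, not merely weak) is where the argument must be done with care; once that dictionary is set up, unboundedness of $z_T$ and convergence of $\dot z_T(0)$ both drop out of the cited results. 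A secondary point to handle cleanly is that Proposition \ref{propn:index} is stated on symmetric intervals $[-T,T]$, so I would either reprove the one-sided comparison on $[0,S]$ directly by the same integration-by-parts computation or note that the argument there is insensitive to the choice of endpoints.
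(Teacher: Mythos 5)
Your proposal is correct and follows essentially the same route as the paper, which offers no argument of its own beyond deducing the lemma from \cite[Theorem 1.5, Lemma 2.4]{echevarria-cuesta25b} together with Green's classical scheme; your index-form monotonicity argument for $\dot z_T(0)$ and the translation between the lines $d\varphi_t^{-\top}(\mathbb{H}^*(\varphi_t(v)))$ and the slopes $\dot z_T(0)$ is exactly the intended filling-in of that citation. The only point to tidy up is that if $z_T$ were bounded it would itself already be a bounded non-trivial global solution of \eqref{eqn:jacobi-damped} (no extraction of a limit $z_\infty$ is needed), which is precisely what \cite[Lemma 2.4]{echevarria-cuesta25b} excludes.
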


%\begin{remark} \label{rem:green-bundles}
 %   If $z_s$ (respectively $z_u$) is the solution to \eqref{eqn:jacobi-damped} with initial conditions $(z_s(0), \dot{z}_s(0)) = (1, \lim_{T \rightarrow \infty} \dot{z}_T(0))$ (respectively $(z_s(0), \dot{z}_s(0)) = (1, \lim_{T \rightarrow -\infty} \dot{z}_T(0))$), then the corresponding vector $\xi_s \in \Sigma(v)$ spans the stable Green bundle (respectively unstable Green bundle).
%\end{remark}

\begin{remark} \label{rem:green-bundles}
    If $z_s$ is the solution to the Jacobi equation \eqref{eqn:jacobi-damped} with initial conditions $$(z_s(0), \dot{z}_s(0)) = (1, \lim_{T \rightarrow \infty} \dot{z}_T(0)),$$ then the corresponding vector $\xi_s \in \Sigma(v)$ spans the stable Green bundle. The same is true for $z_u$ with $(z_s(0), \dot{z}_s(0)) = (1, \lim_{T \rightarrow \infty} \dot{z}_T(0))$ and the unstable Green bundle.
\end{remark}

Note that the Green bundles are flow-invariant, and key for our proof is the fact that $\lambda \in D(M,g)$ if and only if the Green bundles of $(M,g,\lambda)$ satisfy $G_s^*(v) \cap G_u^*(v) = \{0\}$ for every $v \in SM$ \cite[Theorem 1.6]{echevarria-cuesta25b}.

%\begin{theorem}[{\cite[Theorem 1.6]{echevarria-cuesta25b}}] \label{theorem:transversal-green-bundles}
 %   Let $(M,g)$ be a closed oriented Riemannian surace, and let $\lambda \in B(M,g)$. We have that $\lambda \in D(M,g)$ if and only if for every $v \in SM$ the Green bundles of $(M,g,\lambda)$ satisfy $G_s^*(v) \cap G_u^*(v) = \{0\}$.
%\end{theorem}

\subsection{Proof of Theorem \ref{theorem:c2-interior}}

We now fix a Riemannian metric $g$ on $M$. As in the introduction, let $B(M,g) \subset \mathcal{C}^\infty(SM, \R)$ be the set of functions $\lambda$ such that $(M,g,\lambda)$ has no conjugate points, and let $D(M,g)$ be the set of functions such that $(M,g,\lambda)$ is projectively Anosov. The goal is to take $\lambda \in B(M,g) \setminus D(M,g)$ and show that, for every $\varepsilon > 0$, there exists $\phi_\varepsilon \in \mathcal{C}^\infty(SM, \R)$ such that $\|\phi_\varepsilon\|_{\mathcal{C}^2(SM)} \leq  \varepsilon$ and $\lambda + \phi_\varepsilon \in B(M,g)^c.$

\begin{remark}
    In absence of a thermostat version of Klingenberg's result, it is possible that $\lambda + \phi_\varepsilon \in D(M,g)$ (see \cite[Remark 1.7]{echevarria-cuesta25b}). 
\end{remark}
To begin, fix $\varepsilon > 0$. 
%Theorem \ref{theorem:transversal-green-bundles}
By \cite[Theorem 1.6]{echevarria-cuesta25b}, there exists a point $v \in SM$ such that the Green bundles are equal along the orbit $t \mapsto \varphi_t(v)$. If this orbit is closed, let $C>0$ be half its period; otherwise, choose $C > 0$ arbitrarily. For each $T > 0$, let $z_{ T}$ be the solution to the Jacobi equation \eqref{eqn:jacobi-damped} satisfying the conditions \eqref{eqn:defn-zt}. Let $f_T \in P \mathcal{C}^2([-T,T], \R)$ be the function defined by 
\[ f_T(t) \coloneq \begin{cases} z_{-T}(t) & t \in [-T,0],\\ z_T(t) & t \in [0,T]. \end{cases}\]

\begin{figure}[h]
    \centering
    \includegraphics[scale=0.5]{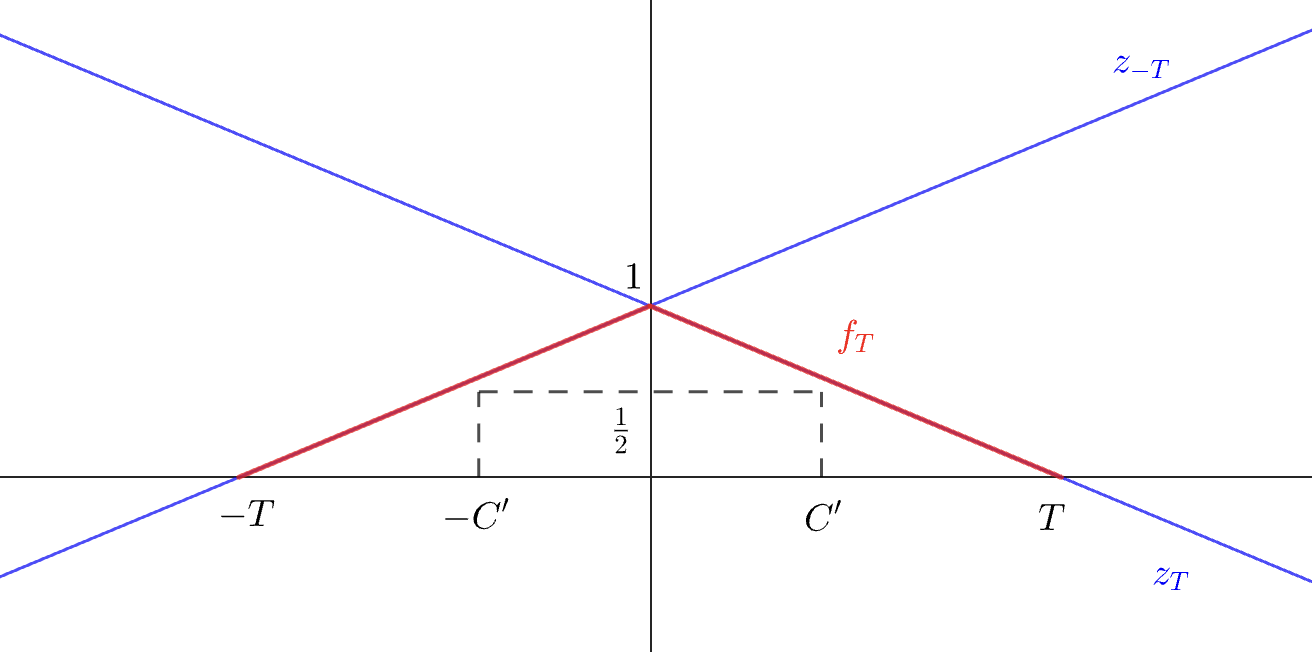}
    \caption{A schematic plot of the functions $z_{-T}$, $z_T$ and $f_T$.}
    \label{fig:enter-label}
\end{figure}

See Figure \ref{fig:enter-label} for an illustration of the construction of $f_T$. Using Lemma \ref{lem:convergence-zt} and Remark \ref{rem:green-bundles}, we see that, for $T$ large enough, there exists a constant $C' \in (0,C/2)$ such that $f_T(t) > 1/2$ for $|t| \leq C'$. Using integration by parts, we deduce that 
\[\begin{split} I_v(f_T) &= \int_{-T}^0 \left(\dot{z}_{-T}(t)^2 - \tilde{\kappa}_v(t) z_{-T}(t)^2 \right) dt +  \int_{0}^T \left(\dot{z}_{T}(t)^2 - \tilde{\kappa}_v(t) z_{T}(t)^2 \right) dt  \\
& = \dot{z}_{-T}(0) - \dot{z}_T(0).\end{split}\]
Since the Green bundles coincide along this orbit, Remark \ref{rem:green-bundles} implies that we have $|I_v(f_T)| = |\dot{z}_{-T}(0) - \dot{z}_T(0)| \rightarrow 0$ as $T \rightarrow \infty$. In particular, we may assume that $T$ is sufficiently large so that 
\begin{equation} \label{eqn:estimate1}
    I_v(f_T) \leq \varepsilon C'/4.
\end{equation} 
If the orbit is periodic, then by making $T$ larger if needed we may also assume that there is a $k \in \mathbb{N}$ such that $kC = T$.

With this estimate out of the way, the next step is to construct the perturbing function $\phi_\varepsilon$. Let $\chi \in \mathcal{C}^\infty_c(\mathbb{R}, \R)$ be a bump function with support in $[-1,1]$ which is $1$ on $[-1/2, 1/2]$. Consider the flow box coordinates $(x_1,x_2,x_3)$ around the orbit segment $\{\varphi_t(v) \mid t \in [-C,C]\}$ such that $\partial_{x_1} = F$, $\partial_{x_2} = H$ and $\partial/\partial x_3 = V$, and define the function
\begin{equation} \label{eqn:perturb-defn}
    \phi_\varepsilon(x_1, x_2, x_3) \coloneq \sqrt{\varepsilon} \chi \left( \frac{x_1}{C}\right) \chi\left( \frac{\sqrt{x_2^2 + x_3^2}}{\delta} \right),
\end{equation} 
where $\delta > 0$ is to be determined; see Figure \ref{fig:function-phi} for an illustration of $\varphi_\varepsilon$. %For economy of notation, let $\lambda_\varepsilon^\pm \coloneq \lambda \pm \phi_\varepsilon$.

\begin{figure}[h]
    \centering
    \includegraphics[scale=0.4]{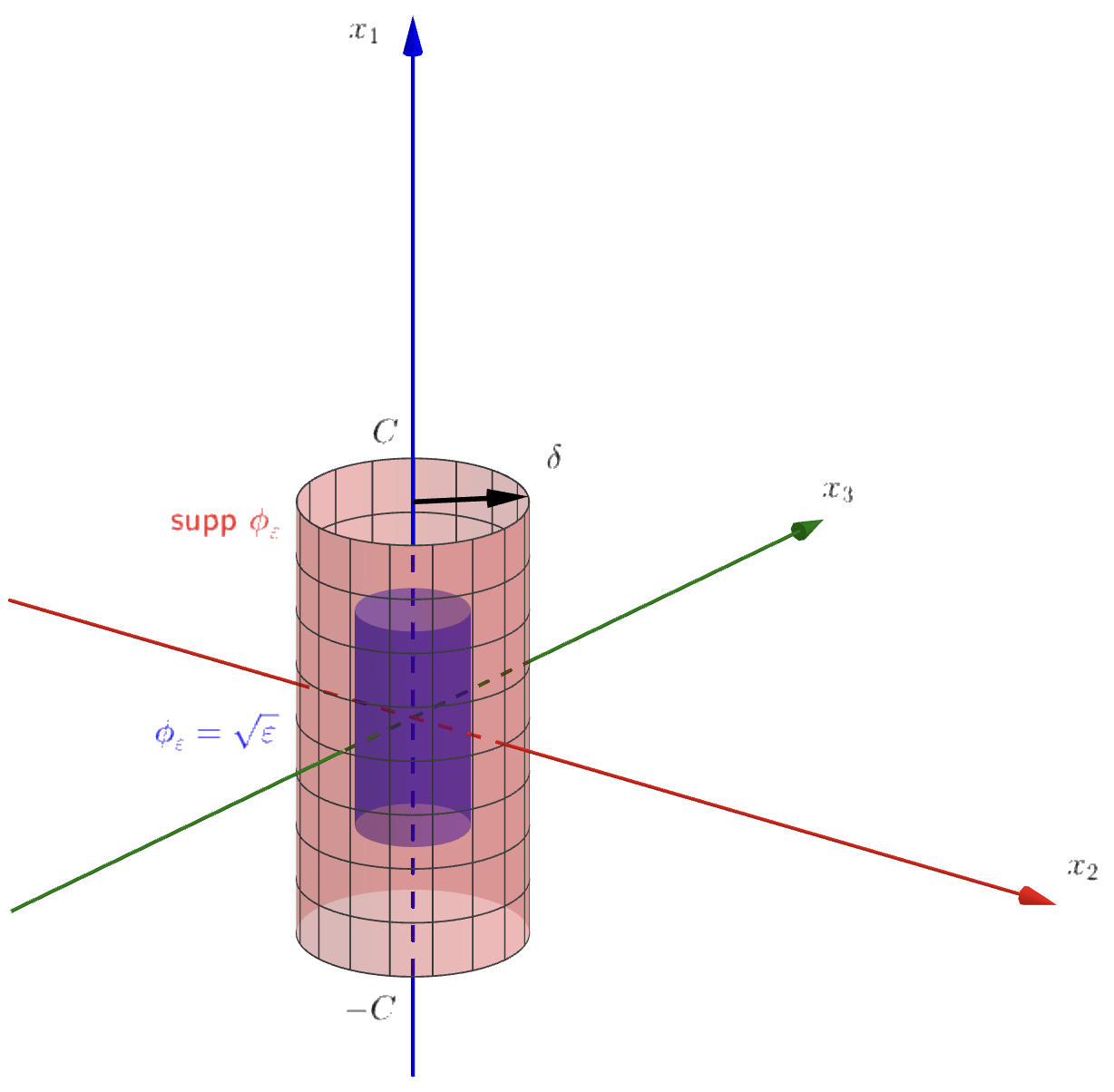}
    \caption{Illustration of the function $\phi_\varepsilon$.}
    \label{fig:function-phi}
\end{figure}

The aim for the rest of this section is to prove the following lemma.

\begin{lemma} \label{propn:c2}
    For every $\varepsilon > 0$, there exists an appropriate choice of $\delta > 0$ and sign $\pm$ such that the thermostat index form $\bar{I}^\varepsilon$ for $(M,g,\lambda \pm \phi_\varepsilon)$ satisfies $\bar{I}^\varepsilon_v(f_T) < 0$ for the given $v \in SM$ and $T > 0$.
\end{lemma}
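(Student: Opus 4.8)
The plan is to compute the change in the index form under the perturbation $\lambda \mapsto \lambda \pm \phi_\epsilon$ and show that the dominant term is linear in $\sqrt{\epsilon}$ with a sign we can control, so that by choosing the sign appropriately we make $\bar I_v(f_T)$ negative. First I would recall that the index form $\bar I$ is built from the damped thermostat curvature $\tilde\kappa_{\lambda\pm\phi_\epsilon}$, so I need to understand how $\tilde\kappa$ changes. From \eqref{eq:definition-damped-curvature} and the formula $\mathbb{K} = \pi^*K_g - H\lambda + \lambda^2 + FV\lambda$, the map $\lambda \mapsto \tilde\kappa$ is a second-order differential operator whose linearization at $\lambda$ in the direction $\phi$ is some expression $L_\lambda(\phi)$ involving $H\phi$, $\lambda \phi$, and derivatives $FV\phi$, $V\phi$; the key point is that it is linear in $\phi$ and its derivatives up to order two. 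Writing $\tilde\kappa_{\lambda\pm\phi_\epsilon} = \tilde\kappa_\lambda \pm L_\lambda(\phi_\epsilon) + O(\|\phi_\epsilon\|_{\mathcal C^2}^2)$, and noting $\phi_\epsilon$ is $O(\sqrt\epsilon)$ in $\mathcal C^2$, we get
\[ \bar I_v(f_T) = I_v(f_T) \mp \int_{-T}^T L_\lambda(\phi_\epsilon)(\varphi_t(v))\, f_T(t)^2\, dt + O(\epsilon).\]

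Next I would analyze the middle integral. Since $\phi_\epsilon$ is supported in the flow box and, along the central orbit $t \mapsto \varphi_t(v)$ (where $x_2 = x_3 = 0$), we have $\phi_\epsilon(\varphi_t(v)) = \sqrt\epsilon\,\chi(t/C)$, the restriction of $\phi_\epsilon$ and its $F$-derivatives to the orbit are explicit, while the transverse derivatives $V\phi_\epsilon$, $H\phi_\epsilon$ vanish on the orbit itself but contribute through the $\delta$-scaling. The honest way to handle this: the integrand $L_\lambda(\phi_\epsilon)(\varphi_t(v))\,f_T(t)^2$ is evaluated on the orbit, so only the values of $\phi_\epsilon$ and its derivatives along the orbit matter. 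I would isolate the term in $L_\lambda(\phi_\epsilon)$ that survives on the orbit and is not small — namely the term coming from $FV\lambda$ differentiated, i.e. something like $FV\phi_\epsilon$ — and show that after integration by parts in $t$ against $f_T^2$ (using that $f_T$ solves \eqref{eqn:jacobi-damped} piecewise and $f_T(\pm T) = 0$) the leading contribution is a nonzero multiple of $\sqrt\epsilon$ times a quantity bounded below, using that $f_T(t) > 1/2$ for $|t| \le C'$ and $\chi(t/C) = 1$ there. The terms involving $V\phi_\epsilon$ or $H\phi_\epsilon$ that are forced to be large by the $1/\delta$ factors in \eqref{eqn:perturb-defn} must be shown either to vanish on the orbit (since $\chi'(0) = 0$, as $\chi \equiv 1$ near $0$, and $\sqrt{x_2^2+x_3^2}$ has a critical point structure at the origin) or to be controllable; this is exactly where the choice of $\delta$ enters — $\delta$ is chosen small enough that the $\mathcal C^2$ norm statement in Theorem \ref{theorem:c2-interior} still holds while the transverse contributions along the orbit remain negligible.

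Then, assembling the estimate, I would write $\bar I_v(f_T) \le I_v(f_T) \mp c\sqrt\epsilon + O(\epsilon)$ for some constant $c > 0$ independent of $\epsilon$ (depending on $v$, $T$, $C'$, $\lambda$), choosing the sign $\pm$ so that $\mp c\sqrt\epsilon = -c\sqrt\epsilon$. Combining with the earlier estimate \eqref{eqn:estimate1} that $I_v(f_T) \le \epsilon C'/4$, we get $\bar I_v(f_T) \le \epsilon C'/4 - c\sqrt\epsilon + O(\epsilon)$, which is negative for $\epsilon$ small; and since the lemma is about a fixed $\epsilon$, one reads the argument the other way — for the given $\epsilon$ we arranged $T$ large (hence $I_v(f_T)$ tiny), and the scaling $\phi_\epsilon = \sqrt\epsilon(\cdots)$ makes the linear gain beat the quadratic loss.

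The main obstacle I anticipate is the careful bookkeeping in the second paragraph: verifying that the potentially dangerous terms in $L_\lambda(\phi_\epsilon)$ — those with a $1/\delta$ or $1/\delta^2$ factor from differentiating $\chi(\sqrt{x_2^2+x_3^2}/\delta)$ transversally — genuinely do not spoil the sign of the leading term when restricted to the central orbit. This requires exploiting that the central orbit sits at $x_2 = x_3 = 0$, that $\chi$ is locally constant there so its first transverse derivative vanishes, and that second transverse derivatives, while nonzero, either integrate against $f_T^2$ to something of lower order in $\sqrt\epsilon$ or can be absorbed by taking $\delta$ small; making this rigorous (as opposed to the clean linear picture) is the delicate part, and it is presumably why the statement is phrased as "an appropriate choice of $\delta > 0$" rather than giving $\delta$ explicitly.
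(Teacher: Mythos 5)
Your overall strategy matches the paper's — expand the perturbed damped curvature, use that $\chi \equiv 1$ near $0$ to kill transverse derivatives of $\phi_\epsilon$ on the central orbit, choose the sign of $\phi_\epsilon$, and beat the smallness of $I_v(f_T)$ coming from \eqref{eqn:estimate1} — but three of your key steps have genuine problems. First, the term you isolate as the dominant surviving one, $FV\phi_\epsilon$, in fact vanishes identically on the central orbit: for $\sqrt{x_2^2+x_3^2} < \delta/2$ the second factor in \eqref{eqn:perturb-defn} is constant, so $V\phi_\epsilon \equiv 0$ on a full neighborhood of the orbit segment and hence so is its derivative along the flow. What actually survives is the zeroth-order part of the linearization, namely $\left(2\lambda - V^2\lambda/2\right)\phi_\epsilon$, together with the quadratic term $-\phi_\epsilon^2$. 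This matters because your closing estimate rests on the linear contribution being ``a nonzero multiple of $\sqrt{\epsilon}$'': the coefficient $\int (2\lambda - V^2\lambda/2)\,\chi(t/C)\, f_T^2\, dt$ can perfectly well vanish, in which case your $-c\sqrt{\epsilon}$ gives nothing while your $O(\epsilon)$ error has no sign. The paper's argument is robust to this precisely because the always-negative term $-\phi_\epsilon^2 = -\epsilon \chi^2$, integrated against $f_T^2 > 1/4$ on $[-C',C']$, supplies a definite gain of order $\epsilon C'$; the sign choice is only needed to prevent the linear term from fighting it. So the final comparison is $\epsilon$ versus $\epsilon$ with explicit constants, not $\sqrt{\epsilon}$ versus $\epsilon$ — which is also why the lemma holds for every $\epsilon > 0$ and not only for small $\epsilon$.

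Second, and more seriously, you never address what happens when the orbit $t \mapsto \varphi_t(v)$ re-enters the support of $\phi_\epsilon$ at times in $[-T,T] \setminus [-C,C]$. At such times the orbit need not lie on the axis $x_2 = x_3 = 0$ of the flow box, so $H\phi_\epsilon$ and $V\phi_\epsilon$ are of size $\sqrt{\epsilon}/\delta$ and $FV\phi_\epsilon$ of size $\sqrt{\epsilon}/\delta^2$; these contributions blow up as $\delta \to 0$ and have uncontrolled sign, so they can destroy the estimate. The paper needs a dichotomy here: if the orbit is not periodic, compactness lets one shrink $\delta$ until there are no re-entries at all; if it is periodic, one takes $C$ to be half the period and $T = kC$, so that every re-entry passes exactly through the center of the box, where the transverse derivatives vanish again and the contribution has the same favorable sign. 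Relatedly, $\bar{I}_v$ evaluates $\bar{\kappa}$ along the \emph{perturbed} orbit $\bar{\varphi}_t(v)$, not along $\varphi_t(v)$; your expansion silently identifies the two, whereas the paper carries the extra term $\int (\bar{\kappa}(\varphi_t(v)) - \bar{\kappa}(\bar{\varphi}_t(v))) f_T^2\, dt$ and controls it by shrinking $\delta$. Your instinct that the choice of $\delta$ is where the delicacy lies is correct, but the work $\delta$ does is to kill the re-entry set and the orbit deviation — not to tame transverse derivatives on the central orbit, which vanish for free.
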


Assuming the lemma holds, we use the contrapositive of Lemma \ref{lem:index} to deduce that $(M,g,\lambda \pm \phi_\varepsilon)$ must admit a pair of conjugate points. Since $\|\phi_\varepsilon\|_{\mathcal{C}^2(SM)} \rightarrow 0$ as $\varepsilon \rightarrow 0^+$, Theorem \ref{theorem:c2-interior} follows.

\begin{proof}[Proof of Lemma \ref{propn:c2}]
    Let $\tilde{\kappa}_\varepsilon$ be the damped thermostat curvature of $(M,g,\lambda \pm \phi_\varepsilon)$ and let $\{\varphi^\varepsilon_t\}_{t\in \R}$ be the thermostat flow. By adding and subtracting terms from the equation \eqref{eqn:damped-index}, we get
    \[ \begin{split} I^\varepsilon_v(f_T) = I_v(f_T) &+ \int_{-T}^T \left(\tilde{\kappa}(\varphi_t(v)) - \tilde{\kappa}_\varepsilon(\varphi_t(v))\right) (f_T(t))^2\, dt \\
    & + \int_{-T}^T \left(\tilde{\kappa}_\varepsilon(\varphi_t(v)) - \tilde{\kappa}_\varepsilon(\varphi^\varepsilon_t(v)) \right) (f_T(t))^2\, dt.
    \end{split}\]
    Observe that the support of $\phi_\varepsilon$ gets uniformly smaller as $\delta \rightarrow 0$. On the interval $[-T,T]$, we may choose $\delta$ sufficiently small such that $\varphi^\varepsilon_t(v)$ is uniformly close to $\varphi_t(v)$ for $t \in [-T,T]$. Using inequality \eqref{eqn:estimate1}, we may assume that $\delta$ is small enough so that
    \begin{equation} \label{eqn:estimate2} I^\varepsilon_v(f_T) \leq \frac{\varepsilon C'}{2} + \int_{-T}^T \left(\tilde{\kappa}(\varphi_t(v)) - \tilde{\kappa}_\varepsilon(\varphi_t(v))\right)  (f_T(t))^2\, dt.\end{equation}
   
    The next step is to analyse the integral of the differences between the damped curvatures. Using definition \eqref{eq:definition-damped-curvature}, we can rewrite $\tilde{\kappa}_\varepsilon$ in terms of $\tilde{\kappa}$ as follows:
    \[ \tilde{\kappa}_\varepsilon = \tilde{\kappa} - H\phi_\varepsilon + 2\lambda \phi_\varepsilon + \phi_\varepsilon^2 + \frac{\phi_\varepsilon V^2\lambda}{2} + \frac{(X + \lambda_\varepsilon V)(V\phi_\varepsilon)}{2} - \frac{V(\lambda) V(\phi_\varepsilon)}{2} - \frac{(V\phi_\varepsilon)^2}{4}. \]
    On the orbit segment $\{\varphi_t(v) \mid t \in [-C,C]\}\subset SM$, we have $H\phi_\varepsilon=0$ and $V\phi_\varepsilon =0$, so we can express the differences in the damped curvature as
    \[ \tilde{\kappa} - \tilde{\kappa}_\varepsilon = - \phi_\varepsilon^2 -  \left(2 \lambda - \frac{V^2\lambda}{2} \right) \phi_\varepsilon.\]
    After potentially adjusting the sign of $\phi_\varepsilon$, we can shrink $\delta$ so that the following holds:
    \begin{equation} \label{eqn:estimate3} \int_{-C}^C \left(\tilde{\kappa}(\varphi_t(v)) - \tilde{\kappa}_\varepsilon(\varphi_t(v))\right) (f_T(t))^2\, dt \leq -\varepsilon C'. \end{equation}

    Let
    \[\Delta \coloneqq \{t \in [-T,T] \setminus [-C,C] \mid \varphi_t(v) \in \text{supp}(\phi_\varepsilon)\}.\] 
    Note that $\tilde{\kappa} = \tilde{\kappa}_\varepsilon$ outside of the support of $\phi_\varepsilon$, and so using inequality \eqref{eqn:estimate3} we are left with
    \[ \int_{-T}^T \left(\tilde{\kappa}(\varphi_t(v)) - \tilde{\kappa}_\varepsilon(\varphi_t(v))\right) (f_T(t))^2\,  dt \leq -\varepsilon C' + \int_{\Delta}\left(\tilde{\kappa}(\varphi_t(v)) - \tilde{\kappa}_\varepsilon(\varphi_t(v))\right) (f_T(t))^2\,  dt. \]
    If the orbit is not periodic, then by compactness there is a non-zero distance between the orbit segments coming from $\Delta$ and the orbit segment $\{\varphi_t(v) \mid t \in [-C,C]\}$. In particular, we can assume that $\delta$ is small enough so that $\Delta = \emptyset$. If the orbit is periodic, then the same logic implies that for $\delta$ small enough we have
    \begin{equation*}
    \begin{alignedat}{1}
     \int_{\Delta}\left(\tilde{\kappa}(\varphi_t(v)) - \tilde{\kappa}_\varepsilon(\varphi_t(v))\right) (f_T(t))^2\,  dt  &= (k-1) \int_{-C}^C \left(\tilde{\kappa}(\varphi_t(v)) - \tilde{\kappa}_\varepsilon(\varphi_t(v))\right) (f_T(t))^2\,  dt \\
    &\leq -(k-1) \varepsilon C',
    \end{alignedat}
    \end{equation*}
    since we arranged to have $kC = T$ for some integer $k\geq 1$. In either case, there is a $\delta$ small enough so that we can rewrite inequality \eqref{eqn:estimate2} as
    \[ I^\varepsilon_v(f_T) \leq \frac{\varepsilon C'}{2} - \varepsilon C' < 0. \qedhere\]
\end{proof}

\begin{remark}
    This perturbation is easier in the thermostat case than if we wanted to keep it, say, as a pure magnetic perturbation.
\end{remark}

\section{Reversible thermostats}

\subsection{Mirror thermostats}
The \textit{flip map} $\mathcal{F} : TM \rightarrow TM$ is given by $(x,v)\mapsto (x,-v)$. We define the \textit{mirror thermostat} of $(M,g, \lambda)$ to be $(M,g,\lambda^{\mathcal{F}})$, where $\lambda^{\mathcal{F}} \coloneq - \lambda \circ \mathcal{F}$. The following lemma relates the flow of a thermostat and that of its mirrored version.

\begin{lemma}\label{lemma:conjugacy}
    If $\{\varphi_t\}_{t\in \R}$ is the flow associated to $(M,g,\lambda)$, then $\{\mathcal{F} \circ \varphi_{-t} \circ \mathcal{F}\}_{t\in \R}$ is the flow associated to $(M,g,\lambda^{\mathcal{F}})$.
\end{lemma}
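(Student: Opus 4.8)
The plan is to identify the infinitesimal generator of the candidate flow $\psi_t := \mathcal{F}\circ\varphi_{-t}^\lambda\circ\mathcal{F}$ and check that it coincides with $X + \lambda^{\mathcal{F}} V$, the generator of $(\varphi_t^{\lambda^{\mathcal{F}}})_{t\in\R}$; the lemma then follows from uniqueness of integral curves. First I would note that $\psi_t$ really is a flow: since $\mathcal{F}$ is an involution, $\psi_s\circ\psi_t = \mathcal{F}\circ\varphi_{-s}^\lambda\circ\varphi_{-t}^\lambda\circ\mathcal{F} = \psi_{s+t}$, so $(\psi_t)_{t\in\R}$ is a one-parameter group with generator $Y := \tfrac{d}{dt}\big|_{t=0}\psi_t = -\mathcal{F}_* F$, where $F = X + \lambda V$ and the minus sign comes from differentiating $\varphi_{-t}^\lambda$ at $t = 0$.

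The computation of $Y$ then reduces to the action of $d\mathcal{F}$ on the frame. Here I would record the two standard identities $\mathcal{F}_* X = -X$ and $\mathcal{F}_* V = V$: the first is the classical reversibility of the geodesic flow, obtained by differentiating $\varphi_t^0\circ\mathcal{F} = \mathcal{F}\circ\varphi_{-t}^0$ at $t=0$; the second holds because $\mathcal{F}$ is the fibrewise rotation by $\pi$, hence commutes with the circle action generated by $V$, so $d\mathcal{F}(V_{(x,v)}) = V_{(x,-v)}$. Using $\mathcal{F}_*(\lambda V) = (\lambda\circ\mathcal{F})\,\mathcal{F}_* V$, one gets $\mathcal{F}_* F = -X + (\lambda\circ\mathcal{F}) V$, hence $Y = -\mathcal{F}_* F = X - (\lambda\circ\mathcal{F}) V = X + \lambda^{\mathcal{F}} V$, as required.

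An equivalent, coordinate-free route avoids the frame identities: given a thermostat geodesic $\gamma$ for $\lambda$, set $\sigma(t) := \gamma(-t)$; from $\nabla_{\dot\sigma}\dot\sigma(t) = \nabla_{\dot\gamma}\dot\gamma(-t)$ (time reversal leaves the second covariant derivative unchanged) together with $\dot\sigma(t) = -\dot\gamma(-t)$, the identity $J(-w) = -Jw$, and $\lambda^{\mathcal{F}} = -\lambda\circ\mathcal{F}$, one checks that $\sigma$ solves $\nabla_{\dot\sigma}\dot\sigma = \lambda^{\mathcal{F}}(\sigma,\dot\sigma) J\dot\sigma$; since $(\sigma(0),\dot\sigma(0)) = \mathcal{F}(\gamma(0),\dot\gamma(0))$, translating into flows gives $\varphi_t^{\lambda^{\mathcal{F}}}\circ\mathcal{F} = \mathcal{F}\circ\varphi_{-t}^\lambda$, i.e.\ the claim. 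I do not expect a genuine obstacle here; the only delicate point in either approach is the bookkeeping of signs — in the first, the sign in $Y = -\mathcal{F}_* F$ combined with $\mathcal{F}_* X = -X$; in the second, the behaviour of the covariant derivative and of $J$ under $v\mapsto -v$.
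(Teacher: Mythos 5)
Both of your routes are correct, and your second, ``coordinate-free'' route is precisely the paper's proof: take a thermostat geodesic $\gamma$ for $\lambda$, set $\eta(t):=\gamma(-t)$, use $\dot\eta(t)=-\dot\gamma(-t)$ together with the invariance of $\nabla_{\dot\gamma}\dot\gamma$ under time reversal and the oddness built into $\lambda^{\mathcal{F}}=-\lambda\circ\mathcal{F}$ to see that $\eta$ solves the thermostat equation for $\lambda^{\mathcal{F}}$, and then translate this into the flow identity $\varphi_{-t}^{\lambda}=\mathcal{F}\circ\varphi_t^{\lambda^{\mathcal{F}}}\circ\mathcal{F}$. Your first route — computing the generator $Y=-\mathcal{F}_*F$ of $\psi_t=\mathcal{F}\circ\varphi_{-t}^{\lambda}\circ\mathcal{F}$ and reducing to the frame identities $\mathcal{F}_*X=-X$ and $\mathcal{F}_*V=V$ — is a genuinely different, infinitesimal argument that the paper does not use; it buys a computation entirely inside $T(SM)$ with no covariant derivatives, at the cost of having to justify the two pushforward identities (the first of which is itself the $\lambda=0$ case of the lemma, so this route is really a reduction to the geodesic case plus the observation that $V$ is $\mathcal{F}$-invariant). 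The sign bookkeeping in both of your versions checks out, so there is no gap; either argument is acceptable.
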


\begin{proof}
For the sake of clarity, we use the notation $\{\varphi_t^\lambda\}_{t\in \R}$ to denote the flow on $SM$ with respect to $(M, g,\lambda)$. Let $\gamma$ be a thermostat geodesic for $(M, g,\lambda)$ and let $\eta(t) \coloneqq \gamma(-t)$. Then,
    \[\nabla_{\dot{\eta}(t)} \dot{\eta}(t) =  \nabla_{\dot{\gamma}(-t)} \dot{\gamma}(-t) = \lambda(\gamma(-t), \dot{\gamma}(-t)) J \dot{\gamma}(-t) = \lambda^{\mathcal{F}}(\eta(t), \dot{\eta}(t)) J \dot{\eta}(t), \]
    where we have used the fact that $\dot{\gamma}(-t) = - \dot{\eta}(t)$.
    We see that $\eta$ is a thermostat geodesic for $(M, g,\lambda^{\mathcal{F}})$. Now notice that 
    \[ \begin{split} \varphi_{-t}^{\lambda}(\gamma(0), \dot{\gamma}(0)) &= (\gamma(-t), \dot{\gamma}(-t)) \\
    &= (\eta(t), - \dot{\eta}(t)) \\
    &= \mathcal{F}(\eta(t), \dot{\eta}(t)) \\ &=( \mathcal{F} \circ \varphi_t^{\lambda^{\mathcal{F}}})(\eta(0), \dot{\eta}(0)) \\
    & = (\mathcal{F} \circ \varphi_t^{\lambda^{\mathcal{F}}} \circ \mathcal{F} )(\gamma(0),\dot{\gamma}(0)). \qedhere\end{split}\]
\end{proof}

By definition, the thermostat is reversible if $\mathcal{F}$ conjugates $\varphi_t$ with $\varphi_{-t}$. Therefore, it is reversible if and only if $\lambda^{\mathcal{F}} = \lambda$, that is, if the function $\lambda$ is odd. In the next lemma, we will show how this reversibility condition is encoded at the level of the Fourier series expansions in the angular variable, that is, with respect to the flow $\{\rho_t\}_{t\in \R}$ generated by the vertical vector field $V$. Following \cite[Section 2.1.4]{echevarria-cuesta25}, observe that the space $\mathcal{C}^\infty(SM)$ admits the decomposition
\[ \mathcal{C}^\infty(SM) = \bigoplus_{k \in \mathbb{Z}} \Omega_k, \quad \Omega_k \coloneq \{ u \in \mathcal{C}^\infty(SM) \mid Vu = iku\}.\]
In particular, we can write
$\lambda = \sum_{k \in \mathbb{Z}} \lambda_k,$ where the \textit{$k$th Fourier mode} $\lambda_k\in \Omega_k$ is
\begin{equation*} \label{eqn:fourier-mode} \lambda_k(x,v) \coloneq \frac{1}{2\pi} \int_0^{2\pi} \lambda(\rho_t(x,v)) e^{-ikt} dt.\end{equation*}
We notice that
  \[ \lambda^{\mathcal{F}}_k(x,v) = - \frac{1}{2\pi} \int_0^{2\pi} \lambda(\rho_{t+\pi}(x,v)) e^{-ikt}  dt = e^{i(k+1)\pi}\lambda_k(x,v),\]
  so the following holds in light of Lemma \ref{lemma:conjugacy}.

\begin{lemma}\label{lem:reversibility-odd-fourier}
    A thermostat $(M,g,\lambda)$ is reversible if and only if $\lambda$ has only odd Fourier modes.
\end{lemma}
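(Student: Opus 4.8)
The plan is to prove Lemma \ref{lem:reversibility-odd-fourier} by combining the characterization of reversibility already established — namely that $(M,g,\lambda)$ is reversible if and only if $\lambda^{\mathcal{F}} = \lambda$, which follows from Lemma \ref{lemma:conjugacy} — with the explicit computation of the Fourier modes of $\lambda^{\mathcal{F}}$ displayed just above the statement. Since $\mathcal{F} = \rho_\pi$ (the flip is rotation by $\pi$ in the fiber), one has $\lambda^{\mathcal{F}}(x,v) = -\lambda(\rho_\pi(x,v))$, and a change of variables in the integral defining the $k$-th Fourier mode gives $\lambda^{\mathcal{F}}_k = e^{i(k+1)\pi}\lambda_k = (-1)^{k+1}\lambda_k$. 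This identity is the crux: it says that passing to the mirror thermostat flips the sign of every even Fourier mode and preserves every odd one.

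From here the argument is a short bookkeeping exercise. First I would note that the map $\lambda \mapsto \lambda^{\mathcal{F}}$ is linear and commutes with taking Fourier modes (i.e. $(\lambda^{\mathcal{F}})_k = (\lambda_k)^{\mathcal{F}}$), which is immediate from the integral formula, so the equation $\lambda^{\mathcal{F}} = \lambda$ holds if and only if $(\lambda_k)^{\mathcal{F}} = \lambda_k$ for every $k \in \mathbb{Z}$, by uniqueness of the Fourier decomposition $\mathcal{C}^\infty(SM) = \bigoplus_k \Omega_k$. Substituting $(\lambda_k)^{\mathcal{F}} = (-1)^{k+1}\lambda_k$, this becomes $(-1)^{k+1}\lambda_k = \lambda_k$ for all $k$, i.e. $\lambda_k = 0$ whenever $k+1$ is odd, that is whenever $k$ is even. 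Thus $\lambda^{\mathcal{F}} = \lambda$ if and only if $\lambda_k = 0$ for all even $k$, which is precisely the statement that $\lambda$ has only odd Fourier modes. Invoking the equivalence "reversible $\iff \lambda^{\mathcal{F}} = \lambda$" then closes the proof.

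I do not expect a genuine obstacle here; the lemma is essentially a restatement of the already-displayed computation $\lambda^{\mathcal{F}}_k = e^{i(k+1)\pi}\lambda_k$ together with the reversibility criterion. The only point requiring a word of care is justifying that the condition "$\lambda^{\mathcal{F}}=\lambda$" may be checked mode by mode — this rests on the directness of the sum $\bigoplus_k \Omega_k$ and the fact that Fourier projection is continuous and linear, so it intertwines with the (linear, $V$-equivariant up to the sign twist) operation $\lambda\mapsto\lambda^{\mathcal{F}}$. Everything else is routine, and the proof can be written in a few lines.
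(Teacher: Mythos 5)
Your proposal is correct and follows exactly the paper's route: the paper's entire proof is the displayed computation $\lambda^{\mathcal{F}}_k = e^{i(k+1)\pi}\lambda_k$ combined with the reversibility criterion $\lambda^{\mathcal{F}}=\lambda$ from Lemma \ref{lemma:conjugacy}, and you simply spell out the mode-by-mode bookkeeping that the paper leaves implicit.
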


We will also need the following definition. See Figure \ref{fig:mirror-geodesic}.

%\begin{definition}
%    For a curve $c :\mathbb{S}^1\to SM$, $c(t)=(x(t),v(t))$, let $\overline{c}: \mathbb{S}^1\to SM$ be the \emph{mirrored curve} $\overline{c}(t)\coloneq(x(-t), -v(-t)).$
%\end{definition}

\begin{definition}
    Given a curve $c :\mathbb{S}^1\to SM$, $c(t)=(x(t),v(t))$, its \emph{mirrored curve} is the map $\overline{c}: \mathbb{S}^1\to SM$ given by $\overline{c}(t)\coloneqq(x(-t), -v(-t))$.
\end{definition}

Note that if $c$ is a closed orbit of $(M,g, \lambda)$, then $\overline{c}$ is a closed orbit of $(M, g, \lambda^\mathcal{F})$.

\begin{center}
\begin{figure}[h]
    \centering
    \tikzset{every picture/.style={line width=0.75pt}} %set default line width to 0.75pt        

\begin{tikzpicture}[x=0.75pt,y=0.75pt,yscale=-1,xscale=1]
%uncomment if require: \path (0,300); %set diagram left start at 0, and has height of 300

%Shape: Ellipse [id:dp9712245913748475] 
\draw   (201.5,212) .. controls (201.5,200.13) and (211.32,190.5) .. (223.42,190.5) .. controls (235.53,190.5) and (245.34,200.13) .. (245.34,212) .. controls (245.34,223.87) and (235.53,233.5) .. (223.42,233.5) .. controls (211.32,233.5) and (201.5,223.87) .. (201.5,212) -- cycle ;
%Shape: Ellipse [id:dp047343466853929095] 
\draw   (178.34,166.5) .. controls (215.68,141.37) and (296.4,121) .. (358.64,121) .. controls (420.88,121) and (441.06,141.37) .. (403.72,166.5) .. controls (366.38,191.63) and (285.66,212) .. (223.42,212) .. controls (161.18,212) and (141,191.63) .. (178.34,166.5) -- cycle ;
%Shape: Ellipse [id:dp2998686678155833] 
\draw   (156.42,166.5) .. controls (156.42,154.63) and (166.23,145) .. (178.34,145) .. controls (190.44,145) and (200.25,154.63) .. (200.25,166.5) .. controls (200.25,178.37) and (190.44,188) .. (178.34,188) .. controls (166.23,188) and (156.42,178.37) .. (156.42,166.5) -- cycle ;
%Shape: Ellipse [id:dp9177848486191046] 
\draw   (336.72,121) .. controls (336.72,109.13) and (346.53,99.5) .. (358.64,99.5) .. controls (370.74,99.5) and (380.56,109.13) .. (380.56,121) .. controls (380.56,132.87) and (370.74,142.5) .. (358.64,142.5) .. controls (346.53,142.5) and (336.72,132.87) .. (336.72,121) -- cycle ;
%Shape: Ellipse [id:dp0185091342733813] 
\draw   (381.81,166.5) .. controls (381.81,154.63) and (391.62,145) .. (403.72,145) .. controls (415.83,145) and (425.64,154.63) .. (425.64,166.5) .. controls (425.64,178.37) and (415.83,188) .. (403.72,188) .. controls (391.62,188) and (381.81,178.37) .. (381.81,166.5) -- cycle ;
%Shape: Ellipse [id:dp356867983600619] 
\draw  [color={rgb, 255:red, 255; green, 0; blue, 0 }  ,draw opacity=1 ][dash pattern={on 0.84pt off 2.51pt}] (178.34,145) .. controls (215.68,119.87) and (296.4,99.5) .. (358.64,99.5) .. controls (420.88,99.5) and (441.06,119.87) .. (403.72,145) .. controls (366.38,170.13) and (285.66,190.5) .. (223.42,190.5) .. controls (161.18,190.5) and (141,170.13) .. (178.34,145) -- cycle ;
%Shape: Ellipse [id:dp5337772422456217] 
\draw  [color={rgb, 255:red, 0; green, 15; blue, 255 }  ,draw opacity=1 ][dash pattern={on 0.84pt off 2.51pt}] (178.34,188) .. controls (215.68,162.87) and (296.4,142.5) .. (358.64,142.5) .. controls (420.88,142.5) and (441.06,162.87) .. (403.72,188) .. controls (366.38,213.13) and (285.66,233.5) .. (223.42,233.5) .. controls (161.18,233.5) and (141,213.13) .. (178.34,188) -- cycle ;
\draw  [color={rgb, 255:red, 255; green, 0; blue, 0 }  ,draw opacity=1 ] (192.03,132.15) .. controls (196.28,132.95) and (199.91,132.75) .. (202.89,131.57) .. controls (200.54,133.76) and (198.81,136.95) .. (197.72,141.13) ;
\draw  [color={rgb, 255:red, 255; green, 0; blue, 0 }  ,draw opacity=1 ] (266.65,106.66) .. controls (270.61,108.4) and (274.18,109.02) .. (277.36,108.53) .. controls (274.58,110.14) and (272.17,112.86) .. (270.17,116.69) ;
\draw  [color={rgb, 255:red, 255; green, 0; blue, 0 }  ,draw opacity=1 ] (350.97,93.94) .. controls (354.22,96.79) and (357.45,98.46) .. (360.63,98.95) .. controls (357.49,99.65) and (354.38,101.52) .. (351.31,104.57) ;
\draw  [color={rgb, 255:red, 255; green, 0; blue, 0 }  ,draw opacity=1 ] (427.92,125.95) .. controls (423.73,127.04) and (420.54,128.76) .. (418.34,131.11) .. controls (419.53,128.12) and (419.74,124.5) .. (418.94,120.25) ;
\draw  [color={rgb, 255:red, 255; green, 0; blue, 0 }  ,draw opacity=1 ] (359.44,170.89) .. controls (355.33,169.57) and (351.7,169.32) .. (348.59,170.13) .. controls (351.2,168.24) and (353.31,165.29) .. (354.91,161.27) ;
\draw  [color={rgb, 255:red, 255; green, 0; blue, 0 }  ,draw opacity=1 ] (291.89,188.94) .. controls (288.05,186.95) and (284.52,186.1) .. (281.32,186.38) .. controls (284.2,184.96) and (286.77,182.4) .. (289.02,178.7) ;
\draw  [color={rgb, 255:red, 255; green, 0; blue, 0 }  ,draw opacity=1 ] (227.89,195.18) .. controls (224.59,192.39) and (221.33,190.78) .. (218.15,190.34) .. controls (221.27,189.59) and (224.35,187.66) .. (227.36,184.56) ;
\draw  [color={rgb, 255:red, 255; green, 0; blue, 0 }  ,draw opacity=1 ] (172.64,188.01) .. controls (171.2,183.93) and (169.2,180.9) .. (166.68,178.92) .. controls (169.75,179.84) and (173.38,179.74) .. (177.55,178.57) ;
\draw  [color={rgb, 255:red, 0; green, 64; blue, 255 }  ,draw opacity=1 ] (166.4,216.14) .. controls (167.68,220.27) and (169.56,223.37) .. (172.01,225.45) .. controls (168.97,224.41) and (165.34,224.37) .. (161.13,225.37) ;
\draw  [color={rgb, 255:red, 0; green, 64; blue, 255 }  ,draw opacity=1 ] (218.22,228.72) .. controls (221.34,231.72) and (224.48,233.53) .. (227.63,234.17) .. controls (224.46,234.72) and (221.27,236.45) .. (218.06,239.35) ;
\draw  [color={rgb, 255:red, 0; green, 64; blue, 255 }  ,draw opacity=1 ] (284.17,223.85) .. controls (287.86,226.11) and (291.32,227.2) .. (294.53,227.15) .. controls (291.56,228.37) and (288.81,230.74) .. (286.31,234.27) ;
\draw  [color={rgb, 255:red, 0; green, 64; blue, 255 }  ,draw opacity=1 ] (352.65,206.65) .. controls (356.61,208.39) and (360.19,209.02) .. (363.36,208.54) .. controls (360.58,210.14) and (358.17,212.86) .. (356.16,216.69) ;
\draw  [color={rgb, 255:red, 0; green, 64; blue, 255 }  ,draw opacity=1 ] (411.82,173.2) .. controls (416.14,173.36) and (419.69,172.62) .. (422.47,170.99) .. controls (420.47,173.51) and (419.24,176.93) .. (418.79,181.23) ;
\draw  [color={rgb, 255:red, 0; green, 64; blue, 255 }  ,draw opacity=1 ] (362.76,148.3) .. controls (359.53,145.43) and (356.32,143.74) .. (353.14,143.22) .. controls (356.29,142.55) and (359.41,140.7) .. (362.5,137.67) ;
\draw  [color={rgb, 255:red, 0; green, 64; blue, 255 }  ,draw opacity=1 ] (286.39,153.01) .. controls (282.3,151.61) and (278.68,151.3) .. (275.56,152.05) .. controls (278.2,150.21) and (280.36,147.3) .. (282.03,143.31) ;
\draw  [color={rgb, 255:red, 0; green, 64; blue, 255 }  ,draw opacity=1 ] (215.89,172.42) .. controls (211.58,172.01) and (207.99,172.54) .. (205.13,174) .. controls (207.27,171.6) and (208.7,168.26) .. (209.39,164) ;

% Text Node
\draw (195,104.3) node [anchor=north west][inner sep=0.75pt]  [font=\footnotesize,color={rgb, 255:red, 255; green, 0; blue, 0 }  ,opacity=1 ]  {$\overline{c} ( t)$};
% Text Node
\draw (193,244.3) node [anchor=north west][inner sep=0.75pt]  [font=\footnotesize,color={rgb, 255:red, 0; green, 0; blue, 255 }  ,opacity=1 ]  {$c( t)$};
% Text Node
\draw (134,187.3) node [anchor=north west][inner sep=0.75pt]  [font=\scriptsize]  {$\mathnormal{x}( t)$};

\end{tikzpicture}
    \caption{Illustration of a mirrored curve on $SM$.}
    \label{fig:mirror-geodesic}
\end{figure}

\end{center}
\subsection{Maslov index}\label{subsection:maslov-index}

Let $\mathbb{P}(\Sigma)$ denote the \textit{projective bundle} (also called the \textit{Grassmann $1$-plane bundle}) of the vector bundle $\Sigma\to SM$. That is, $\mathbb{P}(\Sigma)$ is the $4$-dimensional manifold obtained by projectivizing $\Sigma(v)$ for each $v\in SM$: the fibre over $v$ consists of all $1$-dimensional subspaces of $\Sigma(v)$. Note that both $\mathbb{V}^*$ and $\mathbb{H}^*$ define sections of this bundle. The flow $\{\tilde{\varphi}_t\}_{t\in \R}$ on $T^*(SM)$ naturally induces a flow on $\mathbb{P}(\Sigma)$, which we continue to denote by the same symbol.

Another concept which is usually defined on $T(SM)$, but we prefer to 
formulate on $T^*(SM)$, is the Maslov index (see \cite[Section 2.4]{paternain99}). For us, this is a way to attach an index to every continuous closed
curve $c:\mathbb{S}^1\to \mathbb{P}(\Sigma)$.

For every $(v,E^*) \in \mathbb{P}(\Sigma)$ with $E^* \neq \mathbb{H}^*(v)$, there exists a unique real number $r(v,E^*) \in \mathbb{R}$ such that $$E^* = \mathbb{R}(r(v,E^*)\beta(v)+  \psi_\lambda(v)).$$
Using this identification, define the map $m : \mathbb{P}(\Sigma) \rightarrow \mathbb{S}^1$ by 
\[m(v, E^*) \coloneqq \begin{cases} \dfrac{1+i r(v,E^*)}{1-ir(v,E^*)} & \text{if } E^* \neq \mathbb{H}^*(v), \\ 
-1 & \text{otherwise.} 
\end{cases}\]
Let $\widehat{\mathbb{R}} \coloneqq \mathbb{R} \cup \{\infty\}$ denote the one-point compactification of $\mathbb{R}$. We extend $r$ to $\widehat{\mathbb{R}}$ by setting $r(v, \mathbb{H}^*(v)) \coloneqq\infty$. This extension $r : \mathbb{P}(\Sigma) \rightarrow \widehat{\mathbb{R}}$ is continuous when $\mathbb{P}(\Sigma)$ is equipped with the Grassmannian topology, and it is easy to check that the resulting map $m: \mathbb{P}(\Sigma)\to \mathbb{S}^1$ is also continuous. See Figure \ref{figure:m-2}.

\begin{figure}[h]
\centering 
\begin{subfigure}[t]
{0.5\textwidth}
\centering
\includegraphics[scale=0.32]{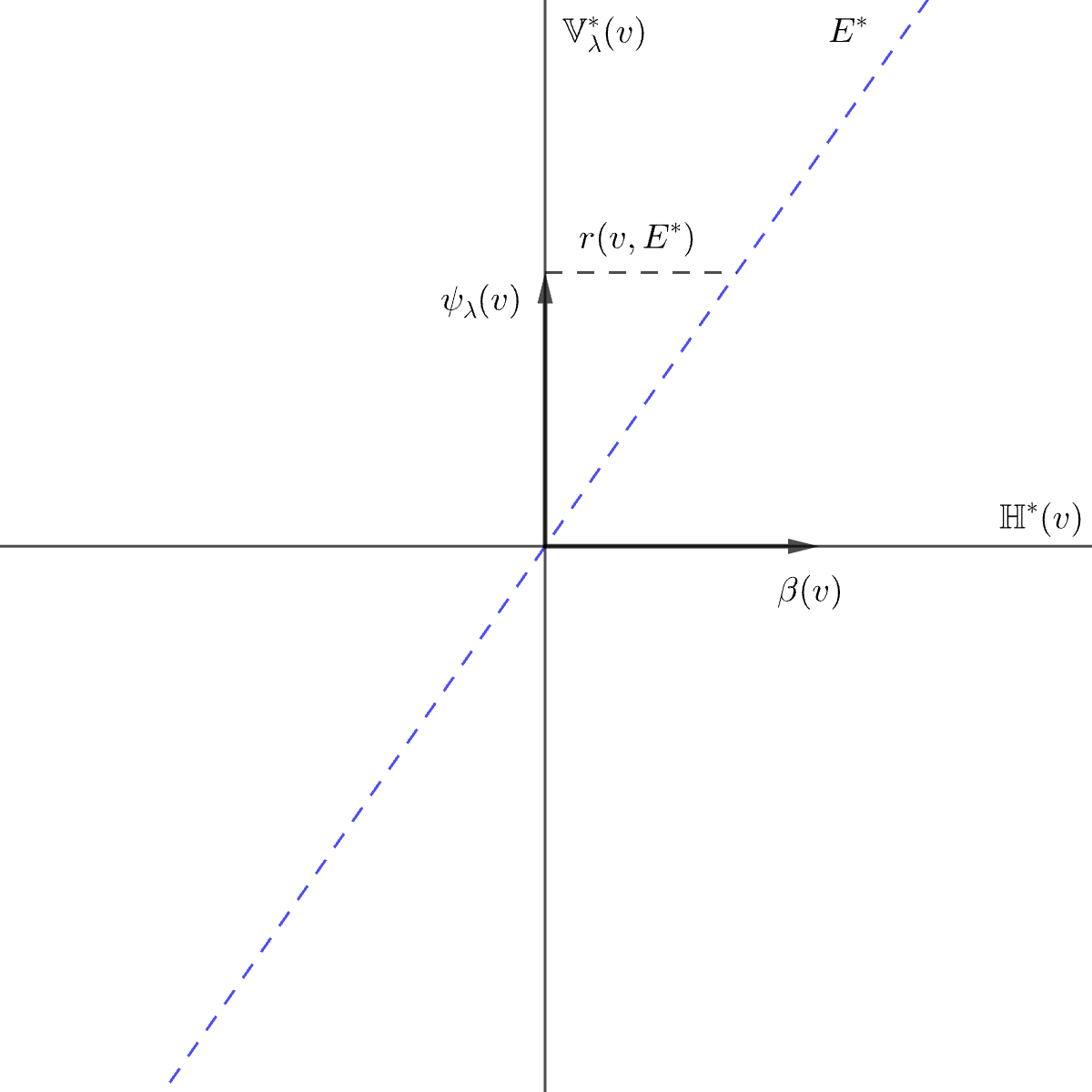}
%\caption*{(a) Dynamics restricted to the fibers of $S\mathbb{T}^2$.}
\end{subfigure}%
\begin{subfigure}[t] 
{0.5\textwidth}
\centering
\includegraphics[scale=0.3392226148]{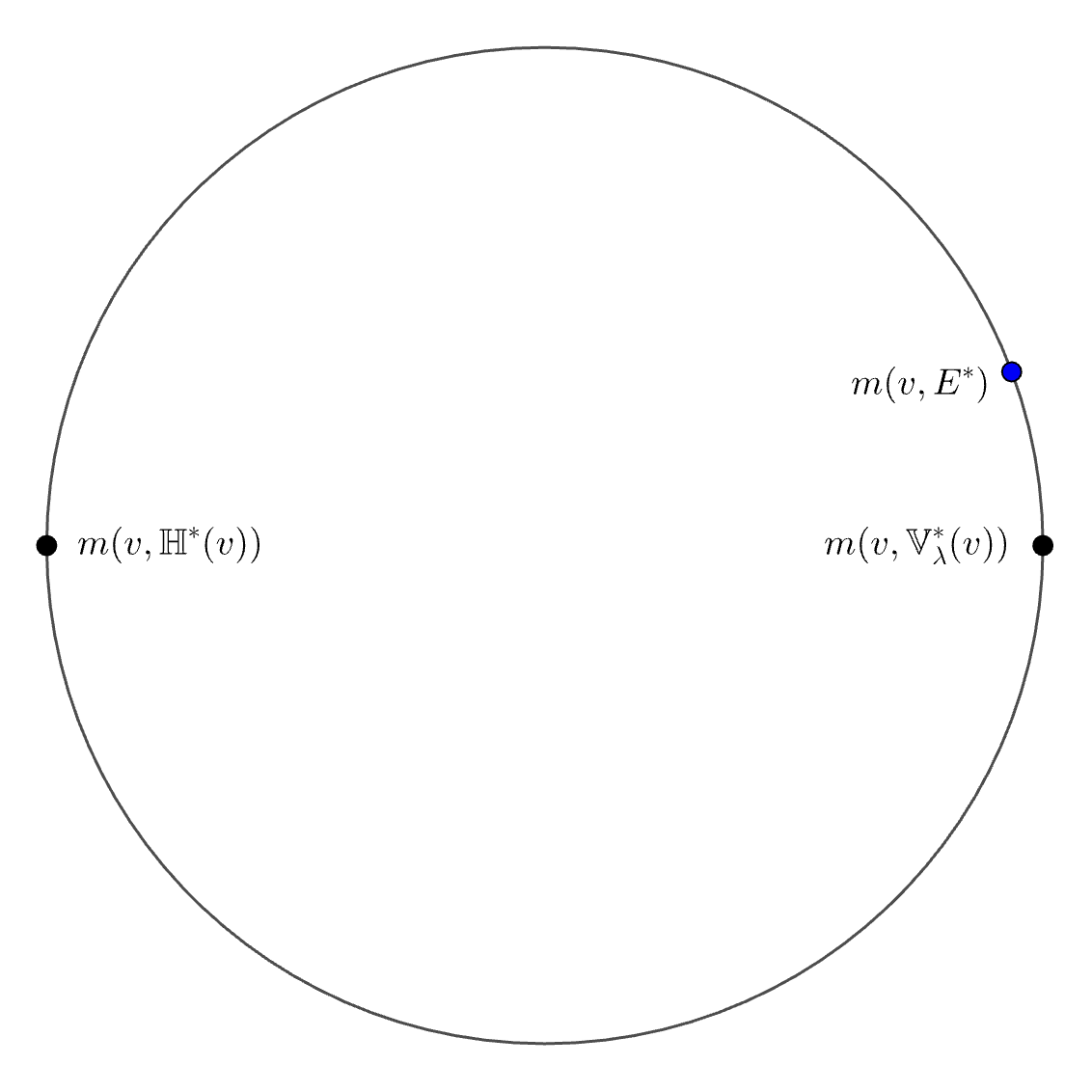}
%\caption*{(b) Evolution of a single trajectory on $\mathbb{T}^2$.}
\end{subfigure}
\caption[The map $m: \mathbb{P}(\Sigma)\to \mathbb{S}^1$ on a fibre over $v\in SM$]{The map $m: \mathbb{P}(\Sigma)\to \mathbb{S}^1$ on a fibre over $v\in SM$.}
\label{figure:m-2}
\end{figure}

The \emph{Maslov index} $\mathfrak{m} : \pi_1(\mathbb{P}(\Sigma)) \rightarrow \mathbb{Z}$ is defined by
\[ \mathfrak{m}([c]) \coloneqq \deg(m \circ c)\]
for every continuous closed curve $c : \mathbb{S}^1\to \mathbb{P}(\Sigma)$. 

Now observe that any $1$-dimensional subbundle $E^*\subset \Sigma$ naturally defines a section $E^* : SM \rightarrow \mathbb{P}(\Sigma)$.  The \emph{$E^*$-Maslov index} $\nu_{E^*} : \pi_1(SM) \rightarrow \mathbb{Z}$ is then defined by 
\begin{equation}\label{eq:maslov-index}
 \nu_{E^*}([c]) \coloneqq \mathfrak{m}([E^* \circ c]) = \deg(m \circ E^* \circ c)
\end{equation}
for every continuous closed curve $c: \mathbb{S}^1\to SM$.

\subsection{Closed orbits near $\mathbb{H}^*$}

In what follows, let $E^*: SM\to \mathbb{P}(\Sigma)$ be a continuous flow-invariant section.

\begin{definition} A continuous closed curve $c : \mathbb{S}^1\to SM$ is a \textit{closed orbit near $\mathbb{H}^*$} with respect to $(M,g,\lambda, E^*)$ if for all $t_0\in \mathbb{S}^1$ where $E^*(c(t_0))=\mathbb{H}^*(c(t_0))$,
there exists $\varepsilon>0$ such that for $t\in (-\varepsilon, \varepsilon)$ we have
$c(t_0+t)=\varphi_t(c(t_0)).$
\end{definition}

Intuitively, a closed orbit $c$ near $\mathbb{H}^*$ can do whatever it wants whenever $E^*(c(t))$ is not $\mathbb{H}^*$. However, near intersections with $\mathbb{H}^*$, the curve $c$ coincides with a closed orbit. 
%(We hope this is a clearer name than pseudo-geodesic.) 
Obviously, a closed orbit is a closed orbit near $\mathbb{H}^*$ for any section $E^*$. Note that this aligns with the definition of a pseudo-geodesic found in \cite{paternain94}.

\begin{lemma}\label{lemma:maslov-index-counting}
    If $c: \mathbb{S}^1\to SM$ is a closed orbit near $\mathbb{H}^*$ with respect to $(M, g, \lambda, E^*)$, then $\nu_{E^*}([c])\geq 0$. Moreover, $\nu_{E^*}([c])>0$ if and only if there exists $t\in \mathbb{S}^1$ such that 
$E^*(c(t))=\mathbb{H}^*(c(t)).$
\end{lemma}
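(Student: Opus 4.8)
The plan is to analyze the local behavior of the curve $c$ near each point where $E^*(c(t))=\mathbb{H}^*(c(t))$ and show that each such crossing contributes a nonnegative amount to the degree $\deg(m\circ E^*\circ c)$, with a strictly positive total if at least one crossing exists. First I would recall that $m$ sends $\mathbb{H}^*$ to $-1\in\mathbb{S}^1$, and that near a point $v$ with $E^*(v)\neq\mathbb{H}^*(v)$ the curve stays on the open arc $\mathbb{S}^1\setminus\{-1\}$, which is contractible; hence only the times $t$ where $E^*(c(t))=\mathbb{H}^*(c(t))$ can contribute to the winding. So the degree decomposes as a sum of local contributions, one for each such $t$ (these crossing times form a compact set; I would want to observe they are isolated, which follows from the closed-orbit-near-$\mathbb{H}^*$ condition together with the fact that along an actual orbit segment the coordinate $r(v,E^*)$ evolves monotonically — more on this below).

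Next I would carry out the local computation at a crossing time $t_0$. By hypothesis, on a small interval $(t_0-\epsilon,t_0+\epsilon)$ the curve coincides with the flow orbit $\varphi_t(c(t_0))$, and $E^*$ is flow-invariant, so on this interval $E^*(c(t_0+t)) = \tilde\varphi_t(E^*(c(t_0))) = \tilde\varphi_t(\mathbb{H}^*(c(t_0)))$. Thus I need to understand the evolution of the line $\tilde\varphi_t(\mathbb{H}^*)$ in $\Lambda(SM)$ as it leaves and returns to $\mathbb{H}^*$. Using the coordinates from \eqref{eq:equations-of-motion}: a line $\mathbb{R}(r\beta+\psi_\lambda)$ corresponds to a solution $(x(t),y(t))$ of the linearized dynamics with $y(0)\ne 0$ and $r = x(0)/y(0)$ (up to normalization), while $\mathbb{H}^*$ corresponds to $y=0$. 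Starting from $\mathbb{H}^*$ means starting a Jacobi-type solution with $y(t_0)=0$; then $y(t)\neq 0$ for $t$ near $t_0$, $t\neq t_0$, and the value $r(t) = x(t)/y(t)$ behaves like $c_0/(t-t_0)$ near the crossing, so $r$ passes through $\infty$, i.e. $m\circ E^*\circ c$ passes through $-1$. The sign of this passage — whether $r$ goes from $+\infty$ to $-\infty$ or vice versa, equivalently whether $m$ winds forward or backward through $-1$ — is controlled by the sign of the relevant Wronskian/symplectic form, which is constant; this is exactly the statement that the Maslov contribution of a single crossing along a genuine orbit is always $+1$ (never $-1$ or $0$). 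Concretely, the damped Jacobi equation \eqref{eqn:jacobi-damped} has Wronskian $x\dot y - \dot x y$ of a fixed sign, forcing $t\mapsto -1/r(t) = y(t)/x(t)$ (or the analogous rotation-number coordinate) to be monotone through each zero of $y$, so $m\circ E^*\circ c$ always crosses $-1$ in the same direction. Hence each crossing time contributes exactly $+1$ to $\deg(m\circ E^*\circ c)$.

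Finally, summing the contributions: $\nu_{E^*}([c]) = \deg(m\circ E^*\circ c)$ equals the number of crossing times (counted with the $+1$ orientation), which is $\geq 0$, and is $>0$ precisely when there is at least one $t\in\mathbb{S}^1$ with $E^*(c(t))=\mathbb{H}^*(c(t))$. The case with no crossings is immediate: then $m\circ E^*\circ c$ maps into the contractible arc $\mathbb{S}^1\setminus\{-1\}$, so the degree is $0$.

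I expect the main obstacle to be making the local analysis at a crossing rigorous and, in particular, nailing down the sign (orientation) of each crossing: one must verify both that the crossing times are isolated (so that "number of crossings" is well-defined and finite, using compactness of $\mathbb{S}^1$) and that the monotonicity coming from the constant-sign Wronskian of \eqref{eqn:jacobi-damped} really does pin the winding direction of $m$ to be always the same — the definition of $m$ via the Cayley transform $r\mapsto (1+ir)/(1-ir)$ needs to be matched carefully against which direction of $\mathbb{S}^1$ counts as positive degree. Once the single-crossing contribution is shown to be $+1$ uniformly, the rest is bookkeeping. A secondary point to handle with care is that $c$ need not be an orbit globally — away from crossings it is arbitrary but stays in the $-1$-free arc, contributing nothing, so the decomposition of the degree into local pieces is legitimate.
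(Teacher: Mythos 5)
Your proposal is correct and follows essentially the same route as the paper: the paper's proof likewise reduces to the observation that away from crossings the curve stays in the contractible arc $\mathbb{S}^1\setminus\{-1\}$, and that at each crossing the curve coincides with a flow orbit along which the twist property of $\mathbb{H}^*$ (from \cite[Lemma 2.1]{echevarria-cuesta25b}, equivalently your transversality/monotonicity argument via $\dot{y}=-x$) forces a transversal passage through $-1$ always in the same direction, so the degree equals the number of crossings. The paper simply cites the twist property where you unpack the local computation.
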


\begin{proof}
    Let
    $P\coloneqq\{t\in \mathbb{S}^1 \mid E^*(c(t))=\mathbb{H}^*(c(t))\}.$
    Since $c$ is a closed orbit near $\mathbb{H}^*$ for $(M, g,\lambda, E^*)$ and $E^*$ is flow-invariant, $E^* \circ c : \mathbb{S}^1\to \mathbb{P}(\Sigma)$ is differentiable in a neighbourhood of any point $t\in P$. The twist property of the cohorizontal subbundle \cite[Lemma 2.1]{echevarria-cuesta25b} then ensures that $\nu_{E^*}([c])\geq 0$. In fact, the $E^*$-Maslov index $\nu_{E^*}([c])$ is equal to the cardinality of $P$.
    %if $P$ is not empty. 
\end{proof}

%{\color{red}
%\begin{remark}
 %   Notice that if $\alpha$ is a thermostat geodesic near $\mathbb{H}^*$ for $(M,g,\lambda, E)$, then $\bar{\alpha}$ is a thermostat geodesic near $\mathbb{H}^*$ for $(M,g, \lambda^{\mathcal{F}}, ?)$. We need to make precise with respect to which bundle $\overline{\alpha}$ is a thermostat geodesic near $\mathbb{H}^*$. I think it is $\mathcal{F}_*(E)$.

  %  Note that $\overline{\alpha}(t)=\mathcal{F}(\alpha(-t))$. %On the parts where $\alpha$ agrees with $\varphi_t$, we thus have $\overline{\alpha}(t)=\mathcal{F} \circ \varphi_{-t}()$
    
%\end{remark}}

%\begin{observation}
%    $(M,g,\lambda)$ is without conjugate points along an orbit if and only if $(E^{s/u})^*(v) \neq \mathbb{H}^*(v)$ along the orbit.
%\end{observation}

%\begin{proof}[Sketch]
%    {\color{red}This is essentially a consequence of \cite[Exercise 2.54]{paternain99}.} %{\color{blue} TODO James: Write out details.}
%\end{proof}

\begin{proposition}\label{proposition:existence-of-pseudo-geodesics}
    Suppose that there exists $v\in \Omega$ such that $E^*(v)=\mathbb{H}^*(v)$. Then, there exists a closed orbit $c: \mathbb{S}^1\to SM$ near $\mathbb{H}^*$ with $\nu_{E^*}([c])>0$. If the thermostat is reversible, then $c$ can be chosen so that $\overline{c}$ is also a closed orbit near $\mathbb{H}^*$.
\end{proposition}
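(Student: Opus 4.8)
The plan is to run a Mañé-style shadowing/closing argument on the non-wandering point $v$, carefully preserving the ``tangency to $\mathbb{H}^*$'' condition and, in the reversible case, a symmetry. First I would fix a small flow box $U$ around $v$ and, since $v \in \Omega$, produce a sequence of return times $t_n \to \infty$ and points $v_n \to v$ with $\varphi_{t_n}(v_n) \to v$ as well. The naive attempt is to close up the orbit segment $\{\varphi_t(v_n) : t \in [0,t_n]\}$ by a short arc inside $U$; the issue is that the resulting closed curve $c_n$ need not be a closed orbit \emph{near} $\mathbb{H}^*$, because the closing arc (and the points $v_n$) may sit at places where $E^* = \mathbb{H}^*$ without $c_n$ being an actual orbit there. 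To fix this, I would use that $E^*(v) = \mathbb{H}^*(v)$ together with the twist property of $\mathbb{H}^*$ (\cite[Lemma 2.1]{echevarria-cuesta25b}): near $v$, the flow-invariant section $E^* \circ \varphi_t$ crosses $\mathbb{H}^*$ transversally at $t = 0$, so $E^*$ stays off $\mathbb{H}^*$ on a punctured neighborhood of $v$ in $SM$. Shrinking $U$ so that $E^* \neq \mathbb{H}^*$ on $U \setminus \{w : E^*(w) = \mathbb{H}^*(w)\}$ — more precisely, on a small enough box the only way to hit $\mathbb{H}^*$ is essentially along the orbit of $v$ — I can arrange the closing arc to lie in the region where $E^* \neq \mathbb{H}^*$, so the closing modification does not violate the defining condition. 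The curve $c_n$ then genuinely follows an orbit wherever it meets $\mathbb{H}^*$ (namely near $v$), hence is a closed orbit near $\mathbb{H}^*$.

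Next, by construction $c_n$ passes through (a point close to) $v$ where $E^* = \mathbb{H}^*$, so $P = \{t : E^*(c_n(t)) = \mathbb{H}^*(c_n(t))\}$ is nonempty; by Lemma \ref{lemma:maslov-index-counting}, $\nu_{E^*}([c_n]) = \#P > 0$. Taking $c := c_n$ for any fixed large $n$ gives the first assertion.

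For the reversible case, the extra input is Lemma \ref{lemma:conjugacy} / Lemma \ref{lem:reversibility-odd-fourier}: when $\lambda$ is odd, $\lambda^{\mathcal{F}} = \lambda$, so the mirror of a closed orbit near $\mathbb{H}^*$ is again a closed orbit (now automatically near $\mathbb{H}^*$, since a genuine closed orbit is a closed orbit near $\mathbb{H}^*$ for \emph{any} section). The cleanest way is to symmetrize the construction: choose the return time and closing arc so that the closed curve $c$ is invariant, up to reparametrization, under $c \mapsto \overline{c}$ — e.g. center the orbit segment at $v$, running from $\varphi_{-t_n/2}(v_n')$ to $\varphi_{t_n/2}(v_n')$ for a suitable $v_n'$ near $v$, and close it up by an arc chosen symmetrically with respect to the flip. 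Concretely, since $\Omega$ is $\mathcal F$-invariant (the flip conjugates $\varphi_t$ to $\varphi_{-t}$), $\mathcal F(v) \in \Omega$ as well, and one can pick the approximating data equivariantly so that $\overline{c} = c$ as an unparametrized curve, or at worst $\overline{c}$ is another closed curve obtained by the same closing recipe — in either case $\overline{c}$ is built from a genuine orbit segment plus a short closing arc that avoids $\mathbb{H}^*$, hence is a closed orbit near $\mathbb{H}^*$. Since $\overline{c}$ still passes near $\mathcal F(v)$ where $E^*$ meets $\mathbb{H}^*$ (if $E^*$ is the relevant flow-invariant section for the mirror thermostat; if $E^*$ is only assumed for the original one, replace it by its flip-image), the positivity $\nu > 0$ is retained via Lemma \ref{lemma:maslov-index-counting}.

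The main obstacle I expect is the first step: ensuring the closing arc can be taken in the region $\{E^* \neq \mathbb{H}^*\}$ while still joining up an orbit segment into a closed curve, i.e. that shrinking the flow box genuinely isolates the tangency locus along the orbit direction. This is where the twist property of $\mathbb{H}^*$ is essential — it guarantees $t \mapsto r(\varphi_t(v), E^*(\varphi_t(v)))$ passes through $\infty$ with a definite sign and does not linger, so on a sufficiently thin flow box the set $\{E^* = \mathbb{H}^*\}$ is a small transversal slice that the closing arc can be routed around. A secondary technical point is making the reversible-case construction genuinely equivariant rather than just ``morally'' so; if exact symmetry of $c$ is hard to enforce, it suffices to note that $\overline{c_n}$ is itself a closed orbit near $\mathbb{H}^*$ with positive index by the same argument applied to the mirror thermostat, which is legitimate precisely because reversibility gives $\lambda^{\mathcal F} = \lambda$ and hence the same flow.
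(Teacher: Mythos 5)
Your overall architecture is the right one---this is the Mañé--Paternain closing argument that the paper itself follows---and your use of Lemma \ref{lemma:maslov-index-counting} for positivity and of reversibility for the mirrored curve point in the correct direction. But the key step fails as written. You justify that the closing arc can avoid the tangency locus by asserting that the twist property forces $E^*$ to stay off $\mathbb{H}^*$ on a punctured neighborhood of $v$ \emph{in $SM$}. That does not follow: \cite[Lemma 2.1]{echevarria-cuesta25b} controls $E^*$ versus $\mathbb{H}^*$ only along the flow direction, i.e.\ it gives $E^*(\varphi_t(v)) \neq \mathbb{H}^*(\varphi_t(v))$ for small $t \neq 0$. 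Transverse to the flow, the set $Z := \{w \in SM : E^*(w) = \mathbb{H}^*(w)\}$ is the preimage of a point under a continuous circle-valued map on a $3$-manifold; it is typically a codimension-one ``wall'' through $v$ which every nearby orbit crosses transversally, and $v$ need not be isolated in it. Your closing arc joins $\varphi_{t_n}(v_n)$ to $v_n$, two points converging to $v \in Z$; if they lie on opposite sides of this wall (which you cannot rule out), every arc between them inside the flow box must cross $Z$ at a point where the curve is not an orbit, so $c_n$ fails to be a closed orbit near $\mathbb{H}^*$. ``Routing around'' the slice is not available inside a small box whose center lies on the slice.

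The repair is not to close up near $v$ at all: first push the endpoints off the tangency locus along the orbit of $v$. By the twist property choose $\epsilon > 0$ with $E^*(\varphi_{\pm\epsilon}(v)) \neq \mathbb{H}^*(\varphi_{\pm\epsilon}(v))$; by continuity $E^* \neq \mathbb{H}^*$ holds on full (unpunctured) neighborhoods $U_1 \ni \varphi_{-\epsilon}(v)$ and $U_2 \ni \varphi_{\epsilon}(v)$. Since $v \in \Omega$, there are $T > 0$ and $w \in U_1$ with $\varphi_T(w) \in U_2$, and the closed curve is assembled from the genuine orbit segment $\varphi_t(v)|_{[-\epsilon,\epsilon]}$ (which carries the curve across $Z$ \emph{as an orbit}), the orbit segment $\varphi_t(w)|_{[0,T]}$, and two connecting arcs confined to $U_1$ and $U_2$, where the defining condition is vacuous. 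Positivity then follows from Lemma \ref{lemma:maslov-index-counting} because $v$ lies on the curve. For the reversible case no equivariant symmetrization is needed: reversibility means the mirror of an orbit segment is again an orbit segment of the \emph{same} flow, so $\overline{c}$ is automatically a curve of the same type; note also that what is ultimately used in Proposition \ref{proposition:invariant-subbundle} is only $\nu_{E^*}([\overline{c}]) \geq 0$, not positivity for $\overline{c}$.
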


\begin{proof}
%The proof goes just as in \cite[Proposition 4]{paternain94} or \cite[Lemma 2.49]{paternain99}.
We follow the proof in \cite[Proposition 4]{paternain94} (see also \cite[Lemma 2.49]{paternain99}). Let $v\in \Omega$ be a point such that $E^*(v)=\mathbb{H}^*(v)$. By 
%Proposition \ref{proposition:twist-property}, 
\cite[Lemma 2.1]{echevarria-cuesta25b},
there exists $\varepsilon>0$ such that, for all $t\in (-2\varepsilon, 2\varepsilon)$ with $t\neq 0$, we have $E^*(\varphi_t(v))\neq \mathbb{H}^*(\varphi_t(v))$. Let $U_1$ be a neighbourhood of $\varphi_{-\varepsilon}(v)$ such that $E^*(\varphi_t(v'))\neq \mathbb{H}^*(\varphi_t(v'))$ for all $v'\in U_1$. Analogously, let $U_2$ be a neighbourhood of $\varphi_{\varepsilon}(v)$ such that $E^*(\varphi_t(v'))\neq \mathbb{H}^*(\varphi_t(v'))$ for all $v'\in U_2$. Since $v\in \Omega$, there exists $T>0$ and $w\in U_1$ such that $\varphi_{T}(w)\in U_2$. Connect $\varphi_\varepsilon(v)$ with $w$ by a path $c_1$ contained in $U_1$, and $\varphi_{T}(w)$ with $\varphi_{-\varepsilon}(v)$ by a path $c_2$ contained in $U_2$. The path $c$ obtained by joining the arcs $c_1$, $\varphi_t(v)|_{[-\varepsilon, \varepsilon]}$, $c_2$ and $\varphi_t(w)|_{[0, T]}$ gives us the desired result thanks to Lemma \ref{lemma:maslov-index-counting}.

If the thermostat is reversible, the mirrored curve of any closed orbit is again a closed orbit, so our construction guarantees that $\overline{c}$ is also a closed orbit near $\mathbb{H}^*$.
\end{proof}

\subsection{Proof of Theorem \ref{theorem:dominated-splitting-theorem}} At this stage, we have the ingredients to show the following result for reversible thermostats.

\begin{proposition}\label{proposition:invariant-subbundle}
    Let $(M, g, \lambda)$ be a reversible thermostat. If there exists a continuous  $1$-dimensional flow-invariant subbundle $E^*$ of its characteristic set, then we have $E^*(v)\cap\mathbb{H}^*(v)=\{0\}$ for all $v\in \Omega$. Furthermore, the thermostat does not have conjugate points in its non-wandering set $\Omega$.
\end{proposition}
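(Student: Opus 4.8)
The plan is to argue by contradiction for the first assertion. Suppose $E^*$ is a continuous $1$-dimensional flow-invariant subbundle of $\Sigma$ and that there exists $v\in\Omega$ with $E^*(v)=\mathbb{H}^*(v)$. By Proposition \ref{proposition:existence-of-pseudo-geodesics}, and crucially using reversibility, we obtain a closed orbit $c:\mathbb{S}^1\to SM$ near $\mathbb{H}^*$ with respect to $(M,g,\lambda,E^*)$ such that $\nu_{E^*}([c])>0$ and whose mirrored curve $\overline{c}$ is also a closed orbit near $\mathbb{H}^*$, this time with respect to the mirror thermostat $(M,g,\lambda^{\mathcal F})=(M,g,\lambda)$ and the pushed-forward section. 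The key point is then to relate $\nu_{E^*}([c])$ and the Maslov index of $\overline{c}$: since $\mathcal F$ conjugates $\varphi_t$ with $\varphi_{-t}$ and acts on $\Sigma$ in a way that reverses the co-orientation detected by $m$, one expects the mirror operation to flip the sign of the Maslov contribution at each intersection with $\mathbb{H}^*$. Concretely, I would show that the flip map induces on $\Lambda(SM)$ a diffeomorphism under which $\mathbb{H}^*\mapsto\mathbb{H}^*$ and which sends $m$ to $\bar m$ (complex conjugation), so that $\deg(m\circ E^*\circ\overline{c})=-\deg(m\circ E^*\circ c)$, hence $\nu$ evaluated on the mirrored data is $<0$. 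But Lemma \ref{lemma:maslov-index-counting} applied to $\overline{c}$ as a closed orbit near $\mathbb{H}^*$ forces that same index to be $\geq 0$, a contradiction. Therefore $E^*(v)\neq\mathbb{H}^*(v)$ for every $v\in\Omega$, i.e.\ $E^*(v)$ and $\mathbb{H}^*(v)$ are transversal there.

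For the second assertion, I would translate transversality of $E^*$ with $\mathbb{H}^*$ along non-wandering orbits into the no-conjugate-points condition phrased via the damped Jacobi equation \eqref{eqn:jacobi-damped}. Recall from \eqref{eq:equations-of-motion} that a vector in $\Sigma(v)$ is written $x\beta+y\psi_\lambda$, and the coordinate $y$ along the orbit satisfies \eqref{eqn:jacobi-damped}; the subspace $\mathbb{H}^*$ corresponds precisely to $y=0$. If $E^*$ is flow-invariant and never meets $\mathbb{H}^*$ along the orbit of $v\in\Omega$, then the solution $z$ of \eqref{eqn:jacobi-damped} spanning $E^*$ (in its damped form) is nowhere vanishing along that orbit. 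By \cite[Corollary 2.6]{echevarria-cuesta25b}, the presence of a conjugate point on the orbit of $v$ would produce a nontrivial solution of \eqref{eqn:jacobi-damped} vanishing twice; a Sturm comparison / Wronskian argument between that solution and the nowhere-vanishing $z$ then yields a contradiction, exactly as in the classical Anosov geodesic case. Hence no $v\in\Omega$ is the basepoint of a conjugate pair, which is the claim.

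The main obstacle I anticipate is the sign bookkeeping in the Maslov index under the flip map: one must verify carefully that $\mathcal F$ really does reverse the orientation of the circle of Lagrangian-type lines in each fiber of $\Lambda(SM)$ relative to $\mathbb{H}^*$, so that mirroring a closed orbit near $\mathbb{H}^*$ negates its $E^*$-Maslov index rather than preserving it. This is precisely the place where reversibility enters essentially — without $\lambda^{\mathcal F}=\lambda$ the mirrored curve $\overline{c}$ is a closed orbit for a \emph{different} thermostat and no contradiction ensues — and it matches the authors' remark that the argument genuinely fails for non-reversible thermostats. A secondary, more routine point is checking that the concatenated curve produced in Proposition \ref{proposition:existence-of-pseudo-geodesics} can be taken with $\overline{c}$ simultaneously a closed orbit near $\mathbb{H}^*$, which amounts to choosing the connecting arcs $c_1,c_2$ in regions where both $E^*$ and its mirror avoid $\mathbb{H}^*$.
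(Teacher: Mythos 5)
Your overall architecture matches the paper's: produce via Proposition \ref{proposition:existence-of-pseudo-geodesics} a closed orbit $c$ near $\mathbb{H}^*$ with $\nu_{E^*}([c])>0$ whose mirror $\overline{c}$ is again a closed orbit near $\mathbb{H}^*$, and derive a contradiction from the sign of $\nu_{E^*}([\overline{c}])$; likewise, your reduction of the conjugate-point claim to transversality along the (invariant) non-wandering set, via a nowhere-vanishing solution of \eqref{eqn:jacobi-damped} and Sturm separation, is exactly the content of the characterization the paper simply cites. The gap is in the one step you yourself flag: the mechanism you propose for $\nu_{E^*}([\overline{c}])=-\nu_{E^*}([c])$ does not work. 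First, rewriting $E^*(\overline{c}(t))=E^*(\mathcal{F}(c(-t)))$ as the flip-image of $E^*(c(-t))$ requires $E^*$ to be equivariant under $\mathcal{F}$; this is not among the hypotheses, and for the intended application it fails, since $\mathcal{F}$ conjugates $\varphi_t$ with $\varphi_{-t}$ and therefore carries the dual stable bundle to the dual unstable one. Second, even granting equivariance, the bookkeeping comes out wrong: the flip acts on fibers by $r\mapsto -r$, i.e.\ $m\mapsto\overline{m}$ (one sign), but $\overline{c}$ also reverses time, $t\mapsto -t$ (a second sign), so one would get $\deg(m\circ E^*\circ\overline{c})=(-1)(-1)\deg(m\circ E^*\circ c)=+\nu_{E^*}([c])$, and no contradiction ensues.

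The correct route is purely topological and uses neither equivariance of $E^*$ nor the fiberwise action of $\mathcal{F}$ on $\Lambda(SM)$: the flip is rotation by $\pi$ in the fibers of $SM$, hence homotopic to the identity, so $\overline{c}$ is freely homotopic to $c^{-1}(t):=c(-t)$; since $\nu_{E^*}=\deg(m\circ E^*\circ\,\cdot\,)$ is a homotopy invariant that changes sign under reversal of the loop's orientation, $\nu_{E^*}([\overline{c}])=\nu_{E^*}([c^{-1}])=-\nu_{E^*}([c])<0$. This contradicts Lemma \ref{lemma:maslov-index-counting} applied to $\overline{c}$. Note that the lemma yields a \emph{nonnegative} index for $\overline{c}$ precisely because the twist of \cite[Lemma 2.1]{echevarria-cuesta25b} has the same sign along $\overline{c}$ as along $c$; the picture in your sketch, in which mirroring flips the sign of each local Maslov contribution, is the opposite of what happens. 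Reversibility enters only to guarantee that $\overline{c}$ is a closed orbit near $\mathbb{H}^*$ for the \emph{same} thermostat and the \emph{same} section $E^*$, so that Lemma \ref{lemma:maslov-index-counting} applies to it at all.
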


\begin{proof}
   Given our characterization of conjugate points in \cite[Theorem 1.5]{echevarria-cuesta25b}, we only have to show that $E^*(v)\cap \mathbb{H}^*(v)=\{0\}$ for all $v\in \Omega$. Suppose, for contradiction, that $E^*(v) \cap \mathbb{H}^*(v) \neq \{0\}$ for some $v\in \Omega$. By Proposition \ref{proposition:existence-of-pseudo-geodesics}, there exists a closed orbit $c$ near $\mathbb{H}^*$ through $v$ such that $\nu_{E^*}([c])>0$. Let $c^{-1}(t)\coloneq c(-t)$ and observe that $c^{-1}$ and $\overline{c}$ are homotopic. Thus $0<\nu_{E^*}([c])=-\nu_{E^*}([c^{-1}])=-\nu_{E^*}([\overline{c}])$. However, since $\overline{c}$ is also a closed orbit near $\mathbb{H}^*$, it has a non-negative $E^*$-Maslov index by Lemma \ref{lemma:maslov-index-counting}, which is a contradiction. 
\end{proof}

%The projectively Anosov property gives us two continuous flow-invariant bundles $(E_{s/u})^*$, 

 As for any projectively Anosov flow, the \textit{dual stable} and \textit{unstable subbundles} $E_{s}^*, E_u^*$ in $ T^*(SM)$ are defined by the relationship $$ E_{s}^*(q^{-1}(\mathcal{E}_s)) = 0=E_{u}^*(q^{-1}(\mathcal{E}_u)),$$ where $q : T(SM) \rightarrow T(SM)/\mathbb{R} F$ is the quotient map. These two subbundles are continuous (see \cite[Lemma 2.28]{araujo10}) and flow-invariant, so
 we obtain Theorem \ref{theorem:dominated-splitting-theorem} by applying Proposition \ref{proposition:invariant-subbundle} to either of them.
 While it could still be the case that Theorem \ref{theorem:dominated-splitting-theorem} applies to non-reversible thermostats, we include the following simple example to show that Proposition \ref{proposition:invariant-subbundle} fails for these systems.

\begin{lemma}\label{lemma:counterexample}
Let $(\mathbb{T}^2, g, \lambda_0)$ be a magnetic system, where $g$ is a flat Riemannian metric and $\lambda_0 \neq 0$ is constant. Then, there exist smooth $1$-dimensional flow-invariant subbundles of $\Sigma$, yet the non-wandering set $\Omega=S\mathbb{T}^2$ contains conjugate points.
\end{lemma}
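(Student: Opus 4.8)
The plan is to exhibit explicitly the flow-invariant subbundles on the flat magnetic torus and then exploit the translation symmetry of the flat torus to show that every geodesic is closed (or returns), forcing $\Omega = S\mathbb{T}^2$, while simultaneously showing that conjugate points are present because the magnetic curvature is positive.

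First I would recall that for a constant-magnitude magnetic flow on the flat torus, the thermostat curvature is $\mathbb{K} = \pi^* K_g - H\lambda + \lambda^2 + FV\lambda = \lambda_0^2 > 0$, since $K_g = 0$ and $V\lambda_0 = H\lambda_0 = 0$ (here $\lambda_0$ is constant). Because $V\lambda_0 = 0$, the damped curvature $\tilde{\kappa}$ coincides with $\mathbb{K} = \lambda_0^2$, and the damped Jacobi equation \eqref{eqn:jacobi-damped} becomes $\ddot{z} + \lambda_0^2 z = 0$. Every nontrivial solution is a sinusoid with period $2\pi/|\lambda_0|$, hence vanishes infinitely often; by \cite[Corollary 2.6]{echevarria-cuesta25b} the system has conjugate points, and in fact conjugate points occur along \emph{every} orbit, so in particular at every point of $\Omega$.

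Next I would show $\Omega = S\mathbb{T}^2$. On the flat torus the magnetic geodesics with $\lambda_0$ constant are circles of fixed radius $1/|\lambda_0|$ (the orbit through $(x,v)$ is the lift of a Euclidean circle). In $S\mathbb{T}^2 \cong \mathbb{T}^2 \times \mathbb{S}^1$ the flow is $\varphi_t(x, \theta) = (x + \tfrac{1}{\lambda_0}(R_{\theta + \lambda_0 t} - R_\theta)e, \theta + \lambda_0 t)$ for a suitable rotation family $R$, so the $\mathbb{S}^1$-factor is rotated at constant speed and the base point traverses a circle. Either the circle closes up in $\mathbb{T}^2$ (orbit is periodic) or its translation vector over one period $2\pi/|\lambda_0|$ is irrational, in which case the orbit is dense in a 2-torus inside $S\mathbb{T}^2$; in both cases every point is non-wandering. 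A clean way to package this: the time-$(2\pi/\lambda_0)$ map is a translation of $\mathbb{T}^2$ (in the base) fixing the $\mathbb{S}^1$-coordinate, and translations of a torus are minimal or periodic, hence have full non-wandering set; combined with the flow direction this gives $\Omega = S\mathbb{T}^2$.

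Finally, for the invariant subbundles, I would exhibit them concretely using the adapted coframe $\{\alpha,\beta,\psi_{\lambda}\}$ on $\Sigma = \mathbb{H}^* \oplus \mathbb{V}^*_\lambda$ (with $\psi_{\lambda_0} = \psi$ since $\lambda_0$ is constant). A section $E^* = \mathbb{R}(r\beta + \psi)$ is flow-invariant precisely when $r$ satisfies the Riccati equation $\dot{r} = -r^2 - \tilde{\kappa} = -r^2 - \lambda_0^2$ along orbits; since $\tilde{\kappa} \equiv \lambda_0^2$ is constant, the constant functions $r \equiv c$ do not solve this, but one checks that the Riccati flow on $\widehat{\mathbb{R}}$ for the constant-coefficient equation $\ddot z + \lambda_0^2 z = 0$ is periodic with period $\pi/|\lambda_0|$, and any such periodic orbit of the projectivized linear flow that is constant along the $V$-direction gives a smooth global section — concretely, take the subbundle spanned by a fixed linear combination that is invariant under the (elliptic) monodromy, which one can arrange by averaging or by noting the linearized flow in $\Sigma$ is, in these coordinates, a constant rotation. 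Alternatively and most simply: the equations of motion \eqref{eq:equations-of-motion} with constant $\tilde{\kappa}$ show that $d\varphi_t^{-\top}$ acts on each fiber $\Sigma(v) \cong \mathbb{R}^2$ as the \emph{same} constant rotation-type $\mathrm{SL}(2,\mathbb{R})$ matrix (independent of $v$, since the curvature is constant), which is conjugate to a rotation; its invariant lines in the complexification descend to a pair of smooth invariant line subbundles, OR, if one prefers real subbundles, the absence of real invariant lines for an elliptic matrix is not an obstruction because we may instead use the fact that on $\mathbb{T}^2$ the bundle $\Sigma$ is trivial and the monodromy can be trivialized — I would double-check which of these gives genuinely \emph{smooth} global sections and present that one. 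The main obstacle is precisely this last point: verifying that a flow-invariant line field genuinely exists globally and smoothly (the elliptic monodromy a priori obstructs real invariant lines), which I expect to resolve by passing to the double cover in the fiber or by observing that the relevant monodromy, computed in the trivialization of $\Sigma$ over $\mathbb{T}^2$ adapted to $\{\beta, \psi\}$, is actually trivial because $V\lambda_0 = 0$ makes the vertical part of the linearized flow act trivially on $\Sigma$ along the $\mathbb{S}^1$-fibers.
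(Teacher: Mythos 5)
Your computation of the curvature and the conjugate points is fine (the paper instead cites \cite[Theorem 1.3]{echevarria-cuesta25b}, but your direct argument via $\ddot z+\lambda_0^2 z=0$ and \cite[Corollary 2.6]{echevarria-cuesta25b} is equivalent), and your conclusion $\Omega=S\mathbb{T}^2$ is correct, though the paper gets it in one line from preservation of the Liouville volume plus Poincar\'e recurrence; also note that all orbits here are genuinely periodic (circles of radius $1/|\lambda_0|$ in the universal cover are closed), so there is no ``irrational translation'' case to worry about.

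The genuine gap is the part you flag yourself: the construction of the smooth flow-invariant line subbundles, which is the whole point of the lemma. None of your proposed fixes closes it. The complexified invariant lines of an elliptic matrix do not descend to real line subbundles; ``passing to a double cover in the fiber'' does not produce a subbundle of $\Sigma$ over $S\mathbb{T}^2$ itself; and the observation that the monodromy is trivial (the return map over the period $2\pi/|\lambda_0|$ is rotation by $2\pi$, i.e.\ the identity) only tells you that along each closed orbit \emph{some} invariant line field exists --- it does not give a consistent, smooth choice across all orbits. The resolution in the paper is to let the line depend on the fiber angle: writing $(z,\theta)$ for coordinates on $S\mathbb{T}^2$, one has $\dot\theta=\lambda_0$, which exactly matches the angular speed of the rotation that $d\varphi_t^{-\top}$ induces on the $(x,y)$-coordinates of $\Sigma(v)$ (the solutions $x(t)=x(0)\cos(\lambda_0 t)+y(0)\lambda_0\sin(\lambda_0 t)$, $y(t)=y(0)\cos(\lambda_0 t)-x(0)\lambda_0^{-1}\sin(\lambda_0 t)$). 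Hence for any $c_1,c_2\in\R$ the $1$-form
\[
\lambda_0\bigl(c_1\cos\theta+c_2\sin\theta\bigr)\beta+\bigl(c_2\cos\theta-c_1\sin\theta\bigr)\psi_\lambda
\]
is literally preserved by $d\varphi_t^{-\top}$, and its span is a smooth global flow-invariant line subbundle of $\Sigma$. Your Riccati remark that no \emph{constant} $r$ works is correct but beside the point: the invariant section is not constant in the frame $\{\beta,\psi_\lambda\}$; it rotates with $\theta$. Without some such explicit (or at least verifiable) global construction, the lemma is not proved.
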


\begin{proof}

Note that $\mathbb{K}=\lambda_0^2$. By \cite[Lemma 2.1]{echevarria-cuesta25b}, the functions $x,y\in \mathcal{C}^\infty(\R, \R)$ defined in  equation \eqref{eq:equations-of-motion} satisfy
\begin{equation*}\label{eq:flow-on-sigma}
    \begin{cases}
\dot{x}- \lambda_0^2 y=0,\\
\dot{y}+x = 0,
    \end{cases}
\end{equation*}
with solutions
\begin{equation*}\label{eq:flow-on-sigma2}
    \begin{cases}
x(t)= x(0)\cos(\lambda_0 t)+y(0)\lambda_0 \sin (\lambda_0 t),\\
y(t)= y(0)\cos(\lambda_0 t) - x(0) \lambda_0^{-1}\sin (\lambda_0 t).
    \end{cases}
\end{equation*}
On the other hand, using coordinates $(z, \theta)\in S\mathbb{T}^2$, the equations of motion for the thermostat give us $\dot{\theta}(t)=\lambda_0$. Therefore, for any $c_1, c_2\in \R$, the smooth $1$-forms 
$$\lambda_0 (c_1\cos(\theta)+c_2 \sin (\theta)) \beta + (c_2\cos(\theta) - c_1\sin (\theta))\psi_\lambda$$
are flow-invariant. Finally, this is a magnetic flow, so the Liouville volume form is preserved and hence $\Omega=S\mathbb{T}^2$. However, by \cite[Theorem 1.3]{echevarria-cuesta25b}, there must be conjugate points.
\end{proof}

\begin{remark}
    %As pointed out to us by Gabriel Paternain, 
    One reason as to why Proposition \ref{proposition:invariant-subbundle} may fail when considering a non-reversible thermostat is that the characteristic set of a thermostat and its mirror are different, and so the homotopy between $c^{-1}$ and $\bar{c}$ would have to pass through the cotangent bundle $T^*(SM)$ and leave the characteristic set $\Sigma$. This causes issues with the calculation of the Maslov index.
\end{remark}

%\begin{figure}[h]
%    \centering
%    \includegraphics[scale=0.5]{Images/simulation.png}
 %   \caption{Evolution of a single magnetic geodesic on $\mathbb{T}^2$ for the system $(\mathbb{T}^2, g, \lambda_0)$ from Lemma \ref{lemma:counterexample}, with $\lambda_0=1/100$. }
%\end{figure}

%\printbibliography
\bibliographystyle{alpha}
\bibliography{references}
 
\end{document}